\documentclass[a4paper]{cas-sc}
\usepackage{amsmath,amssymb,amsthm}
\usepackage[ruled,vlined,linesnumbered]{algorithm2e}
\SetKwRepeat{Do}{do}{while}
\usepackage{kbordermatrix}
\usepackage[numbers,square,sort]{natbib}
\usepackage{url}
\usepackage[nobiblatex]{xurl}
\usepackage{subfigure}
\usepackage{mathtools}
\usepackage{blkarray}
\usepackage{bigstrut} 
\usepackage{multirow}
\usepackage{tikz}
\usepackage{tikz-3dplot}
\usepackage{pgfplots}
\usepackage[shortlabels]{enumitem}
\usepackage{afterpage}
\usepackage{float}
\usepackage{placeins}
\usepackage{pgfplots}
\usepgfplotslibrary{colorbrewer}
\usetikzlibrary{pgfplots.statistics, pgfplots.colorbrewer} 
\usepackage{pgfplotstable}
\usepackage{filecontents}
\usepackage{subcaption}

\usepackage{algorithm2e}
\SetAlFnt{\small}
\SetAlCapFnt{\small}
\SetAlCapNameFnt{\small}
\usepackage{algorithmic}
\algsetup{linenosize=\tiny}

\pgfplotsset{compat=newest}
\usepgfplotslibrary{patchplots,statistics}
\usetikzlibrary{datavisualization}
\usetikzlibrary{datavisualization.formats.functions}

{\bf}{}
\newtheorem{probl}{Problem}{}{}
{\bf}{}
{\bf}{}
\newtheorem{prop}{Proposition}{\bf}{}
\newtheorem{lem}{Lemma}{\bf}{}
{\bf}{}
{\bf}{}
{\bf}{}
{\bf}{}
\newtheorem{form}{Formulation}{\bf}{}

\DeclareMathOperator*{\argmax}{arg\,max}

\def\tsc#1{\csdef{#1}{\textsc{\lowercase{#1}}\xspace}}
\tsc{WGM}
\tsc{QE}
\tsc{EP}
\tsc{PMS}
\tsc{BEC}
\tsc{DE}

\definecolor{col1}{RGB}{66,49,34}
\definecolor{col2}{RGB}{37,88,105}
\definecolor{col3}{RGB}{220,140,51}
\definecolor{col4}{RGB}{138,138,51}
\definecolor{col5}{RGB}{138,138,51}

\begin{document}
\LinesNumbered
\let\WriteBookmarks\relax
\def\floatpagepagefraction{1}
\def\textpagefraction{.001}
\shorttitle{The PPS on binary, tree-child networks}
\shortauthors{Frohn~\emph{et al.}}

\title [mode = title]{The parental parsimony problem on binary, tree-child phylogenetic networks}

\author[1]{Martin Frohn}[orcid=0000-0002-5002-4049]
\address[1]{Department of Advanced Computing Sciences, Maastricht University, Paul Henri Spaaklaan 1, 6229 EN Maastricht, Netherlands.}
\fnmark[1]
\cormark[1]

\cortext[cor1]{Corresponding author}
\fntext[fn1]{Email: \href{mailto:martin.frohn@maastrichtuniversity.nl}{martin.frohn@maastrichtuniversity.nl} (Martin Frohn)}

\begin{abstract}
Phylogenetic reconstruction is one of the major challenges in computational biology. Among existing reconstruction methods for phylogenetic networks, an important subtask emerges in extending a leaf-labelling on a phylogenetic network to determine a most parsimonious tree inside the network. There exist different variants of this subtask depending on the biological model assumptions for which distinct evolutionary phenomena are captured by the network. In this article we assume that next to hybridization or recombination events, also allopolyploidy or incomplete lineage sorting are present. Then, finding the most parsimonious tree inside the network is called the \emph{parental parsimony score problem} (PPS), a NP-hard combinatorial optimization problem. We provide the first constant-factor approximation for the PPS on arbitrary but fixed leaf labels and a class of networks on which the PPS remains NP-hard, namely binary, semi-simplex, tree-child phylogenetic networks. Furthermore, we introduce a novel exact solution algorithm for the PPS on binary, tree-child phylogenetic networks and analyze its performance on simulated data.
\end{abstract}

\begin{keywords}
	Combinatorial optimization; integer programming; approximation algorithms; phylogenetics;
\end{keywords}

\maketitle

\section{Introduction}\label{sec:1}
Molecular phylogenetic reconstruction methods infer evolutionary histories from molecular data~\cite{FelsenBook}. Consider a set $\Gamma =\{x_1,x_2,\dots,x_n\}$ of $n\geq 3$ distinct aligned molecular sequences (such as DNA, RNA, codon sequences or whole genomes), called \emph{taxa}. For example, $x\in\Gamma$ can take values over the set $\{A,C,G,T\}$ for DNA sequences or $x$ is encoded by one of the 20 amino acids for protein sequences. Then, we can represent the pattern of diversification events throughout evolutionary history that give rise to the set of taxa $\Gamma$ by an ordered triplet $(T,\phi,w)$, called a \emph{phylogenetic tree} of $\Gamma$, such that $T=(V,E)$ is a rooted tree having $n$ leaves, $\phi$ is a bijection between the leaves of $T$ and the taxa in~$\Gamma$, and $w$ is a vector of non-negative weights associated to the edges of $T$. The latter vector $w$ represents the quantity of evolutionary change along a given edge. We make our study independent of biological assumptions on the set of taxa by choosing an integer $p\geq 1$ such that all taxa in $\Gamma$ take values over an alphabet $S_p=\{0,1,\dots,p\}$. In particular, we assume that our molecular data is given by a set of functions $C:\Gamma\to S_p$, each of which is called a \emph{character} of~$\Gamma$. We call $S_p$ the set of \emph{character states} and, for $s,t\in S_p$, define the \emph{Hamming distance} $d_H$ of $s$ and $t$ by $d_H(s,t)=1$ if $s\neq t$ and $d_H(s,t)=0$ otherwise. Then, for a phylogenetic tree $(T,\phi,w)$ of $\Gamma$ with $T=(V,E)$ and an \emph{extension} $C'$ of $C$ on $V$ given by a map $C':V\to S_p$ with $C'(\phi^{-1}(x))=C(x)$ for all $x\in\Gamma$, we call
\begin{align*}
\text{score}(T,C')=\sum_{(u,v)\in E}d_H(C'(u),C'(v))
\end{align*}
the \emph{parsimony score} of $T$ and $C$. For example, the tree $T=(V,E)$ on the left in Figure~\ref{intro0} shows an extension $C'$ of the character $C:\Gamma\to S_2$ with $C(x_1)=C(x_2)=0$, $C(x_4)=1$, $C(x_3)=C(x_5)=C(x_6)=2$ on $V$ with parsimony score three.
\begin{figure}[pos=!t,align=\centering]
\centering
\includegraphics[scale=0.37]{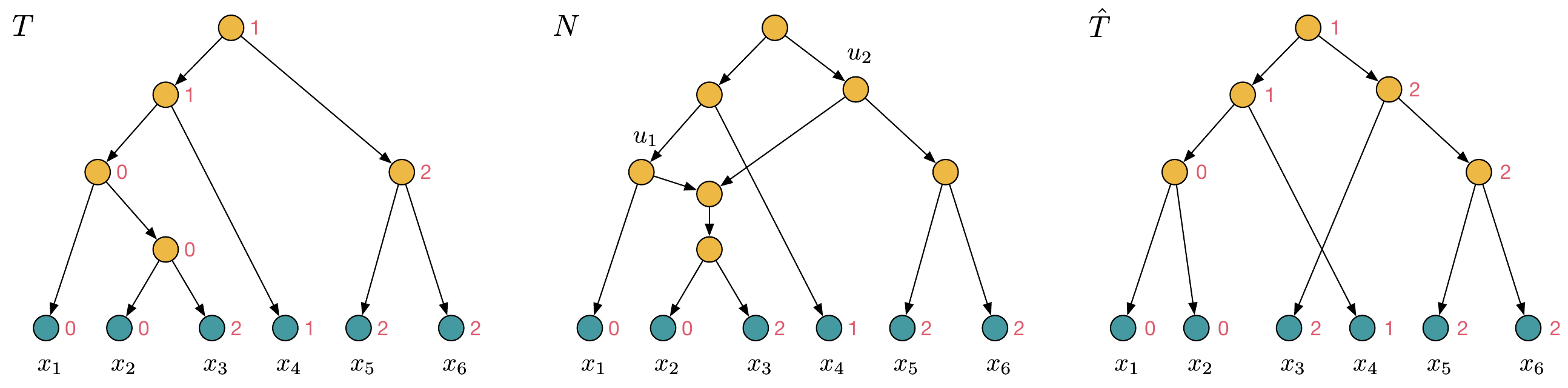}
\caption{On the left a phylogenetic tree $T$ with suppressed edge weights. In red, a character $C$ of $\Gamma$ and one of its extensions $C'$. score$(T,C')=3$ is provably minimum among all extensions of $C$. In the middle a phylogenetic network $N$ such that $T$ and $C'$ constitute a feasible solution to the SPS for $N$ and $C$. On the right a phylogenetic tree $\hat{T}$ parentally displayed by $N$ and an extension $C'$ of $C$ with score$(\hat{T},C')=2$.}\label{intro0}
\end{figure}
A \emph{phylogenetic network} of $\Gamma$ is a generalization of a phylogenetic tree $(T,\phi,w)$ to an ordered triplet $(N,\phi,w)$ where $N$ is a connected directed acyclic rooted graph with $n$ vertices of in-degree 1 and out-degree~0. Hereinafter, given the existence of the bijection $\phi$, with a little abuse of notation, we use the terms phylogenetic tree~$T$ and network~$N$ (and a same symbol) to indicate both a phylogenetic tree and network and the associated graph, respectively. 

To analyse the parsimony score of trees embedded in a phylogenetic network of $\Gamma$, \citet{nakhleh05} introduced the following optimization problem:
\begin{probl}
Let $N$ be a phylogenetic network of $\Gamma$, let $C$ be a character of $\Gamma$ and let $\mathcal{T}$ be a family of phylogenetic trees dependent on $N$. Then, the \emph{Small Parsimony Network Problem}~(SPN) asks for a tree $T\in\mathcal{T}$ such that the parsimony score of $T$ and $C$ is minimum.
\end{probl}
The SPN is often called the small parsimony problem in the literature and varies in the definition of its input graph~$N$~\citep{nguyen07}. Hence, we can view the SPN as part of a family of parsimony problems which are constrained to a fixed input graph and character. 

In this article we are interested in finding a most parsimonious tree inside a phylogenetic network $N$ of $\Gamma$ with respect to one character $C$ of $\Gamma$. Such a tree minimizes the parsimony score among all phylogenetic trees "displayed" by $N$. However, a definition of displayed trees depends on our biological model assumptions. For example, \citet{jin06} call a phylogenetic tree $T$ of $\Gamma$ \emph{displayed} by a phylogenetic network $N$ of $\Gamma$ if $T$ is isomorphic to a subtree of $N$ up to edge subdivisions such that the isomorphism is the identity map on $\Gamma$. Under this definition of displayed trees, $T$ in Figure~\ref{intro0} is displayed by network $N$ in the same figure. This means, all descendants of a \emph{hybridization vertex}~$v$, i.e., a vertex with more than one parent $u$ in the network, have a shared ancestor $u$ in the displayed tree. Then, we arrive at a special case of the SPN~\citep{nakhleh05,fischer15}:
\begin{probl}
Let $N$ be a phylogenetic network of $\Gamma$ and let $C$ be a character of $\Gamma$. Then, the \emph{Softwired Parsimony Score Problem}~(SPS) asks for a tree $T$ displayed by $N$ such that the parsimony score of $T$ and $C$ is minimum.
\end{probl}

The limitation of the SPS to displayed trees of a network does exclude evolutionary phenomena like allopolyploidy~\citep{warren12} or incomplete lineage sorting (ILS)~\citep{yu13} which may occur at the time of hybrid speciations. Both events posit that more than one parent $u$ of a hybridization vertex $v$ form common ancestors for descendants of $v$. For example, taxa $x_2$ and $x_3$ in the network $N$ from Figure~\ref{intro0} might inherit genes from ancestors $u_1$ and $u_2$, respectively. Hence, the tree $\hat{T}$ in the same figure is not displayed by $N$ but might be more parsimonious than any displayed tree of $N$ with respect to $C$. Indeed, the minimum parsimony score of $\hat{T}$ and $C$ in the same figure is smaller than the minimum parsimony score of a tree displayed by $N$. Hence, generalizing the notion of a displayed tree to allow for allopolyploidy or ILS can aid the development of better phylogenetic reconstruction inference methods~\cite{huber16}. Specifically, we call a phylogenetic tree $T$ of $\Gamma$ \emph{parentally displayed} by a phylogenetic network $N$ of $\Gamma$ if $T$ is not necessarily isomorphic to a subgraph of $N$ but preserves directed paths and degree constraints in $N$ (up to edge subdivisions)~\cite{huber06}. Then, we obtain another special case of the SPNP~\cite{van2018improved}:
\begin{probl}
Let $N$ be a phylogenetic network of $\Gamma$ and let $C$ be a character of $\Gamma$. Then, the \emph{Parental Parsimony Score Problem}~(PPS) asks for a tree $T$ parentally displayed by $N$ such that the parsimony score of $T$ and $C$ is minimum.
\end{probl}

In general, the complexity of the SPN is determined by the choice of $\mathcal{T}$. For example, if $\mathcal{T}$ contains exactly one phylogenetic tree, then the SPN can be solved in polynomial time~\citep{fitch71}. In contrast, the SPS and PPS are both NP-hard. Specifically, for binary, tree-child phylogenetic networks~\citep{kong1}, the SPS is NP-hard for binary characters~\citep{fischer15}. The same result holds for the PPS even if in addition the network has reticulation depth one~\citep{van2018improved}.

\begin{figure}[pos=!t,align=\centering]
\centering
\includegraphics[scale=0.37]{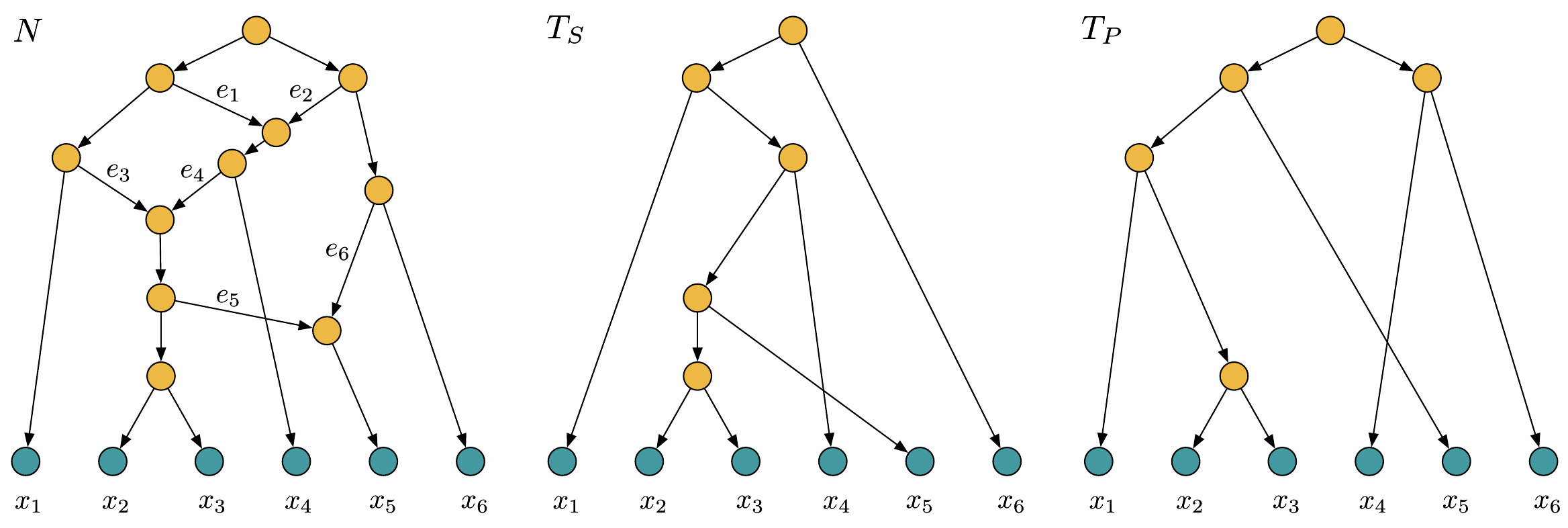}
\caption{Consider the set of taxa $\Gamma =\{x_1,x_2,\dots,x_6\}$. Then, the directed graph on the left shows a phylogenetic network $N$ of $\Gamma$. The graph in the middle (right) shows a phylogenetic tree $T_S$ ($T_P$) of $\Gamma$ which is (parentally) displayed by $N$.}\label{intro}
\end{figure}

While the change between the SPS and PPS is subtle their combinatorial structure exhibits clear differences already for small number of taxa. For example, consider the phylogenetic network $N$ on six taxa in Figure~\ref{intro}. Then, the directed path containing edges $e_1$, $e_4$ and $e_5$ is only preserved by the tree $T_S$ and $T_P$ in the same figure among all feasible solution to the SPS and PPS posed by $N$ and any character $C$ of $\Gamma$, respectively. The clear difference in the topology of trees~$T_S$ and~$T_P$ arises from the three hybridization vertices which are in a shared directed path in $N$. To study the relationship between the SPS and PPS we focus on binary, tree-child phylogenetic networks of $\Gamma$. This subclass is topologically restricted in a specific way which often makes NP-hard optimization problems on phylogenetic networks (comparatively) easier to solve, see e.g. \cite{van2010locating,van2022practical,frohn25}. 

In this article, after a preliminary review and study of the SPS, PPS and their connections in Section~\ref{sec:2} we extend the approximation algorithm for the SPS of \citet{frohn25} to an approximation algorithm for the PPS on binary, tree-child phylogenetic networks in Section~\ref{sec:approx}. In addition, for binary, semi-simplex, tree-child phylogenetic networks, we show that our method has approximation factor two. Subsequently, in Section~\ref{sec:ilp} we introduce an integer programming formulation for the PPS on binary phylogenetic networks to design an exact solution algorithm for the PPS. Finally, in Section~\ref{sec:exp} we analyze the performance of both our approximate and exact method to solve the PPS on simulated binary, tree-child phylogenetic networks of $\Gamma$ and characters of $\Gamma$. In Section~\ref{sec:exp} we conclude this study and discuss future research directions.

\section{Preliminaries}\label{sec:2}
In this section we formalize concepts and introduce notation used throughout the article. For a graph $G$ we denote $V(G)$ and $E(G)$ as the vertex set and edge set of $G$, respectively. We call a connected graph $G$ \emph{rooted} if $G$ is directed and has a unique vertex $\rho\in V(G)$ having in-degree zero and there exists a directed path from $\rho$ to any vertex of in-degree 1 and out-degree 0 of $G$. We call $\rho$ the \emph{root} of $G$ and write $\rho(G)$ whenever $G$ is not clear from the context. In this article we only consider rooted directed acyclic graphs. For $v\in V(G)$, we denote the induced subgraph of $G$ rooted in $v$ by $G[v]$. We call $G[v]$ \emph{pendant} if there exists no directed path in $G$ from a vertex in $V(G[v])$ to a vertex in $V(G)\setminus V(G[v])$. We call a directed graph $G$ \emph{binary} if, for all $v\in V(G)$, the in-degree and out-degree of $v$ sum to at most three. Moreover, we partition $V(G)$ into vertices of out-degree zero $V_{\text{ext}}(G)$, called \emph{external vertices} or \emph{leaves}, and $V_{\text{int}}(G)=V(G)\setminus V_{\text{ext}}(G)$, called \emph{internal vertices}. We omit the mention of $G$ from the definition of these symbols whenever $G$ is clear from the context to simplify our notation. We call hybridization vertices in $V_{\text{int}}$, i.e., vertices with in-degree at least two, also \emph{reticulation vertices} and edges $(u,v)\in E$ for which $v$ is a reticulation vertex \emph{reticulation edges}. We call vertices in $V_{\text{int}}$ with in-degree one \emph{tree vertices}. Any pair of vertices in $V$ which have a shared parent vertex are called \emph{siblings}. When $G$ is a phylogenetic network, as introduced in the last section, then we assume there exist no vertices $v\in V$ with in-degree and out-degree one. Moreover, we call the maximum number of reticulation vertices on any directed path in $G$ the \emph{reticulation depth} of $G$. For example, the phylogenetic network $N$ in Figure~\ref{intro} has reticulation depth three. We call a phylogenetic network with reticulation depth one also \emph{semi-simplex}~\cite{steel25}. We call a rooted phylogenetic network \emph{tree-child} if at least one child of each vertex in $V_{\text{int}}$ is a tree vertex or leaf. For example, the network $N$ in Figure~\ref{intro} is tree-child. However, if we remove taxon $x_4$ and the resulting edge subdivision from $N$, then,  for edges $e_1=(u_1,v)$, $e_2=(u_2,v)$, the unique child of $v$ is a reticulation vertex, i.e., the network is no longer tree-child. We denote $\mathcal{N}$ as the set of rooted, binary, tree-child phylogenetic networks of $\Gamma$.

Making a phylogenetic network $N$ acyclic by deleting exactly one reticulation edge $(u,v)$ for all reticulation vertices $v$ is called \emph{switching}. Let $\mathcal{T}(N)$ denote the set of all phylogenetic trees displayed by the phylogenetic network $N$ and $\mathcal{S}(N)$ the set of switchings of the network. An attractive property of tree-child networks is that a phylogenetic tree $T$ is in $\mathcal{T}(N)$ if and only if there is a switching $S\in\mathcal{S}(N)$  that is isomorphic to $T$ up to edge subdivisions. This property holds for tree-child networks $N$ because in every $S\in\mathcal{S}(N)$ and for all $v\in V(S)$ there exists a path in $S$ from $v$ to a leaf of $N$. Hence, in this case it is sufficient to determine a switching of $N$ to solve the SPS for $N$ and a given character. In more general network classes switchings can also be used to characterize displayed trees, but there isomorphism up to edge subdivisions does not necessarily hold: the switching might contain leaf vertices unlabelled by taxa, for example. 

\begin{figure}[pos=!t,align=\centering]
\centering
\includegraphics[scale=0.4]{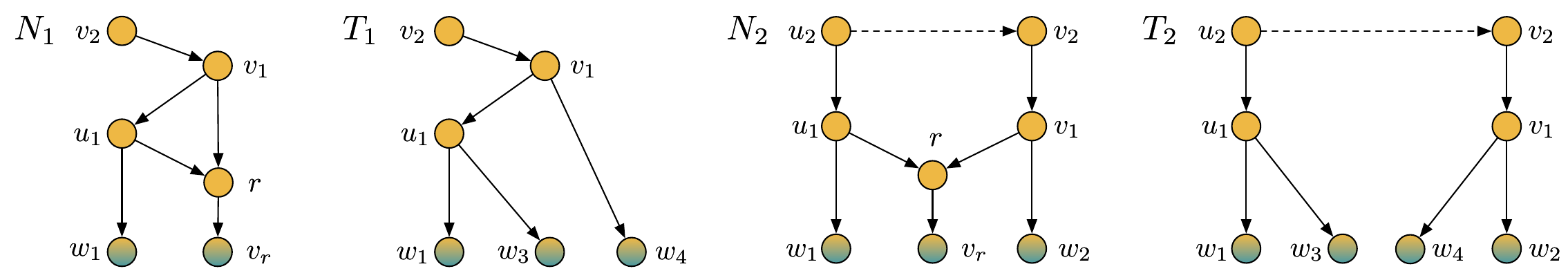}
\caption{Two possible subgraphs $N_1$ and $N_2$ depicting parents and children of a reticulation vertex $r$ and their incidence relations in a rooted, binary, tree-child network $N$. Bicolored vertices can be internal vertices or leaves. If the vertices $v_r$ are roots of subtrees~$T$ of~$N$ and $w_3,w_4\in V(T[v_r])$ such that $V_{\text{ext}}(T[w_3])\cap V_{\text{ext}}(T[w_4])\neq\emptyset$ and $V_{\text{ext}}(T[w_3])\cup V_{\text{ext}}(T[w_4])=V_{\text{ext}}(T[v_r])$, then $T_1$ and $T_2$ can be extended to trees parentally displayed by $N$.}\label{treechild}
\end{figure}

Since our primary interest in this article is the PPS for binary, tree-child phylogenetic networks $\mathcal{N}$ and characters of~$\Gamma$, we consider a formalization of the PPS that is only valid for binary networks based on the notion of lineage functions. \citet{van2018improved} showed that the PPS for binary phylogenetic networks $N$ of $\Gamma$ and a character $C:\Gamma\to S_p$ can be written as a minimization problem over the class of set functions $\lambda:V\to 2^{S_p}$, called \emph{(p-width) lineage functions} on $N$. In particular, for a lineage function~$\lambda$, the authors defined weights $w_\lambda(v)$ for all $v\in V$:
\begin{align*}
w_\lambda(v)=\begin{cases}0 &\text{if $v=\rho$},\\
\left|\lambda(v)\setminus\bigcup\limits_{(u,v)\in E}\lambda(u)\right| &\text{if $v\neq\rho$ and $|\lambda(v)|\leq\sum\limits_{(u,v)\in E}|\lambda(u)|$},\\
\infty &\text{otherwise,}
\end{cases}
\end{align*}
which lead them to prove the following formulation of the PPS:
\begin{prop}\label{prop::linfunc}
Let $N$ be a rooted, binary phylogenetic network of $\Gamma$ and let $C:\Gamma\to S_p$ be a character. Then, the following formulation of the PPS for $N$ and $C$ is valid:
\begin{align*}
\min\left\{\sum\limits_{v\in V(N)}w_{\lambda}(v)\,:\,\lambda\text{ is a $p$-width lineage function on $N$},~~\lambda(x)=\{C(x)\}~~\forall\,x\in\Gamma,~~|\lambda(\rho(N))|=1\right\}.
\end{align*}
\end{prop}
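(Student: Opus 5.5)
We prove the formulation by showing two inequalities between the PPS optimum and the minimum of the lineage-function objective. We use the standard definition of a parentally displayed tree: a tree $T$ on $\Gamma$ together with a map $\pi:V(T)\to V(N)$ that sends leaves to the corresponding leaves, sends edges to directed paths in $N$, and respects the capacity constraint. The capacity constraint says that the number of lineages of $T$ passing through any vertex $v$ of $N$ never exceeds the total number passing through its parents. In the binary setting we view the paths of $T$ passing through $v$ as ``lineages at $v$''; each lineage carries a state from an optimal extension $C'$ of $C$ on $T$.

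\textbf{PPS optimum $\geq$ lineage minimum.} Start from an optimal parentally displayed $T$ with optimal extension $C'$. Define $\lambda(v)$ as the set of states of lineages of $T$ present at $v$. This means all states $C'(t)$ of tree vertices $t$ with $\pi(t)=v$, together with the states carried by edges of $T$ whose image path passes through $v$.
\begin{itemize}
\item At a leaf $x$ we get $\lambda(x)=\{C(x)\}$.
\item At the root we get $|\lambda(\rho)|=1$, after pruning, since $T$ has a single root lineage.
\item Because each lineage entering $v$ arrives through some parent, the capacity constraint forces $|\lambda(v)|\le\sum_u|\lambda(u)|$, so every $w_\lambda(v)$ is finite.
\end{itemize}
Next we charge mutations. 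Each state in $\lambda(v)\setminus\bigcup_u\lambda(u)$ must be created at $v$, by some edge of $T$ whose head has a state different from its tail and whose image crosses into $v$. We assign each such new state to one such mutating edge of $T$, with distinct states at the same $v$ receiving distinct edges. This shows $\sum_v w_\lambda(v)\le \mathrm{score}(T,C')$. We may assume mutations occur at the top of edge paths, which is possible because moving where a state changes along a path does not alter the score.

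\textbf{Lineage minimum $\geq$ PPS optimum.} This is the main obstacle. Given a feasible $\lambda$, we must build a parentally displayed $T$ and an extension whose score is at most $\sum_v w_\lambda(v)$. Process $N$ in topological order, maintaining a multiset of lineages at each vertex, with one lineage per state in $\lambda(v)$.

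At each $v$, its incoming lineages come from the parents. Each state $s\in\lambda(v)\cap\bigcup_u\lambda(u)$ inherits a parent lineage carrying $s$, at zero cost. Each of the $w_\lambda(v)$ new states is attached to some remaining parent lineage, paying one mutation.
\begin{itemize}
\item If $v$ is a tree vertex with a single parent, extra states beyond the parent's count are produced by branching, i.e.\ creating a vertex of $T$ mapped to $v$.
\item If $v$ is a reticulation with two parents, the inequality $|\lambda(v)|\le\sum_u|\lambda(u)|$ guarantees that enough incoming lineages exist, so no branching is needed beyond what the capacity allows.
\item Surplus parent lineages not used at $v$ either continue to the other child of the parent or terminate. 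Termination is allowed because a dead lineage is simply pruned; it may leave an unlabelled leaf, which we then suppress.
\end{itemize}

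To finish, we check that the resulting object is a tree rooted at the single root lineage, which uses $|\lambda(\rho)|=1$. We also check that its leaves are exactly $\Gamma$ with the correct states, and that pruning dead lineages never increases the score. We also need to verify that the capacity condition of parental display holds at every vertex. This is where the binary hypothesis matters: a tree vertex of $N$ has a single parent, and a reticulation has exactly two, so the counting inequality is precisely the condition for the merging of lineages to be realizable. Combining the two inequalities gives equality of the optima, which proves the formulation.
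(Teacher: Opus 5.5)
The paper gives no proof of this proposition; it imports it from van Iersel et al., so your argument has to stand on its own. Your second direction is essentially sound. Matching each state of $\lambda(v)$ to a distinct parent lineage, with inherited states first, uses at most $\sum_u|\lambda(u)|$ lineages and pays exactly $w_\lambda(v)$. Your bullet about tree vertices is confused, though. If $v$ has a single parent $u$, then $|\lambda(v)|\le|\lambda(u)|$, so there are never extra states at $v$; branching happens at $u$, when one lineage of $u$ feeds both of its children.

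\textbf{The gap is in the first direction.} It sits at the sentence ``the capacity constraint forces $|\lambda(v)|\le\sum_u|\lambda(u)|$''. Capacity bounds numbers of \emph{lineages}, but your $\lambda(u)$ is a \emph{set of states}. Several lineages at a parent that carry the same state count once on the right-hand side, while their descendants at $v$ may carry distinct states.

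This happens for optimal trees, even on binary, tree-child, semi-simplex networks. Take $\rho\to a,b$; $a\to A,r$; $b\to B,r$; $r\to p$; $p\to v,p'$; $v\to g_1,g_2$; $g_i\to X_i,Y_i$; $p'\to S_1,S_2$. Set $C(A)=C(B)=C(S_i)=s$, $C(X_i)=x$, $C(Y_i)=y$. The tree $((A,(S_1,(X_1,X_2))),(B,(S_2,(Y_1,Y_2))))$ is parentally displayed: one lineage enters $r$ from each parent, and each branches at $p$ and at $v$. Its score is $2$, which is optimal since three states occur at the leaves. Its unique optimal extension gives both tree vertices mapped to $p$ state $s$, and those mapped to $v$ states $x$ and $y$. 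So your $\lambda$ has $\lambda(p)=\{s\}$, $\lambda(v)=\{x,y\}$ and $w_\lambda(v)=\infty$, wherever you place the changes along the paths.

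\textbf{A repair that works.} Let $\ell(q)$ be the number of lineages at $q$, so initially $|\lambda(q)|\le\ell(q)$. While some $v$ has $|\lambda(v)|>\sum_u|\lambda(u)|$, do the following. Pick $y\in\lambda(v)\setminus\bigcup_u\lambda(u)$; it exists because $|\lambda(v)\cap\bigcup_u\lambda(u)|\le\sum_u|\lambda(u)|<|\lambda(v)|$. Pick a parent $u^*$ with $|\lambda(u^*)|<\ell(u^*)$; it exists because $\sum_u|\lambda(u)|<|\lambda(v)|\le\ell(v)\le\sum_u\ell(u)$. Add $y$ to $\lambda(u^*)$. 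Each such step:
\begin{itemize}
\item lowers the new-state count at $v$ by one and raises it at $u^*$ by at most one;
\item cannot raise it at any other child of $u^*$;
\item preserves $|\lambda(q)|\le\ell(q)$ and never touches leaves;
\item strictly increases $\sum_q|\lambda(q)|\le\sum_q\ell(q)$.
\end{itemize}
The process therefore terminates with every weight finite and total weight still at most $\mathrm{score}(T,C')$. Also $|\lambda(\rho)|\le\ell(\rho)=1$, and $\lambda(\rho)\neq\emptyset$ because finite weights propagate nonemptiness upward from the leaves. In the example it yields, e.g., $\lambda(r)=\lambda(p)=\{s,y\}$ with total weight $2$.

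With this step added, and with the root lineage extended up to $\rho$ when the root of $T$ maps strictly below it, your two-inequality argument goes through.
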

Notice that Proposition~\ref{prop::linfunc} characterizes the objective function value of the PPS but not trees parentally displayed by~$N$. In addition, we want to make use of the tree-child property when studying the PPS. To this end, the local structure of reticulation vertices in a binary, tree-child network is shown in Figure~\ref{treechild} (when allowing $u_2=v_2$). Hence, we refer to the two subnetworks $N_1$ and $N_2$ in Figure~\ref{treechild} as the \emph{reticulation graphs}. Observe that reticulation graphs encode all possible relationships between adjacent vertices of a reticulation vertex because the tree-child property prohibits pairs of reticulation vertices from being adjacent or siblings.

Now, we discuss the approximation algorithm for the SPS of~\citet{frohn25} summarized in Algorithm~\ref{SPSapprox} as our first building block to solve the PPS for $N\in\mathcal{N}$ and a character $C$ of $\Gamma$. To this end, for a phylogenetic tree $T$ and a map $C':V(T)\to 2^{S_p}$, we define the operation resolve$(T,C')$ as the traversal of $T[v]$ in preorder while, for $(a,b)\in E(T[v])$, setting $C'(b)=C'(a)$ if $C'(a)\subseteq C'(b)$, and making $C'(b)$ a singleton by removing character states at random otherwise. Furthermore, for an algorithm $\mathcal{A}$ processing vertices $V_{\text{int}}(N)$, we define a \emph{tree-priority queue} $Q$ as a priority queue such that tree vertices have a higher priority than reticulation vertices $r$ who are ordered based on their induced reticulation graphs: if the sibling(s) of $r$ in Figure~\ref{treechild} have been processed by $\mathcal{A}$, then $r$ has a higher priority than reticulation vertices for which there exists a corresponding unprocessed sibling. Furthermore, to simplify our notation adding reticulation vertices with two unprocessed siblings to $Q$ has no effect on the state of $Q$. Ties between the priorities of vertices in $Q$ are broken by their insertion order. Then, Algorithm~\ref{SPSapprox} is well-defined.
\begin{prop}\citep{frohn25}\label{prop::SPSapprox2}
Let $N\in\mathcal{N}$ and let $C$ be a character of $\Gamma$. Then, Algorithm~\ref{SPSapprox} yields a tight 2-approximation of the SPS for $N$ and $C$. In addition, if line~35 is not applied to reticulation graphs $N_2$ with $C'(w_1)\neq C'(u_1)$ and $C'(w_2)\neq C'(v_1)$, then Algorithm~\ref{SPSapprox} returns an optimal solution.
\end{prop}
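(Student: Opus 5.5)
We have to plan a proof of a cited result without seeing Algorithm~\ref{SPSapprox}. From the surrounding text we infer its shape. Vertices are processed bottom-up using a tree-priority queue. Each internal vertex receives a Fitch-type set $C'(v)\subseteq S_p$. At reticulation graphs the algorithm decides which reticulation edge to delete, that is, it builds a switching. A final resolve step turns the sets into a singleton extension. By the switching characterization of $\mathcal{T}(N)$ for tree-child networks, the output is a displayed tree, so feasibility is immediate.

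\textbf{Approach.} We compare the algorithm's cost with an optimal switching $S^*$ and its optimal extension, charging locally reticulation graph by reticulation graph.

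\textbf{Step 1: cost of the algorithm's own tree.} On the switching $S$ it outputs, we first show that the Fitch-style set computation (intersection if nonempty, union otherwise) followed by resolve$(T,C')$ attains the Fitch optimum on $S$. This is the classical argument of \citet{fitch71}. It reduces the analysis to counting the union steps, the vertices where children's sets are disjoint.

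\textbf{Step 2: lower bound.} We prove by induction over the processing order a potential inequality. Let $N[v]$ be a pendant part processed so far. The number of union steps the algorithm incurs there is at most the optimum SPS cost restricted to $N[v]$ plus the number of line-35 applications to reticulation graphs $N_2$ in which both $C'(w_1)\neq C'(u_1)$ and $C'(w_2)\neq C'(v_1)$. The tree-child property guarantees that reticulation graphs are disjoint in their reticulation vertices and never adjacent. So each such bad decision can be charged to a distinct mutation that any displayed tree must pay inside that reticulation graph. This charging gives the factor $2$. It also gives exactness when line~35 is never applied in the bad configuration, since then the additive term vanishes.

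\textbf{Step 3: tightness.} We exhibit a family of networks built by chaining $N_2$ gadgets, with characters forcing the bad branch each time, so that the ratio tends to $2$.

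\textbf{Main obstacle.} The hard part is the exchange argument in Step~2 for $N_2$. We must show that whichever reticulation edge $S^*$ keeps, $S^*$ pays at least one mutation in the gadget's vicinity that the algorithm's greedy choice, based only on processed sibling sets, might have to pay twice. We would try a case split on which parent $S^*$ retains together with the Fitch sets at $u_1,v_1,w_1,w_2$.
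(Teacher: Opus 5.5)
The paper does not prove this proposition; it is imported from \citet{frohn25}. Your plan therefore has to stand on its own, and its accounting already fails on a single reticulation graph. In Algorithm~\ref{SPSapprox} the sets at the parents of a reticulation vertex are fixed before the switching is. In the tie case (lines~27 and~28) \emph{both} parents receive a union containing the set of the reticulation's child, and only line~35 later decides which reticulation edge survives.

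Take $N_2$ with $u_2=v_2=\rho$, leaves $w_1,w_2$ in state $0$ and $v_r$ in state $1$, reading the queue as processing each vertex after its children. Then $C'(u_1)=C'(v_1)=\{0,1\}$, the root gets $\{0,1\}$, and $\text{OPT}=1$.
\begin{itemize}
\item If line~10 resolves the root to $1$, you are exactly in the configuration $C'(w_1)\neq C'(u_1)$, $C'(w_2)\neq C'(v_1)$. The returned tree is $((w_1,v_r),w_2)$ or $(w_1,(v_r,w_2))$, whose Fitch score is $1$, but the returned extension scores $2$. So Step~1 is false: the output extension need not be Fitch-optimal on the output switching, and the loss sits in the labelling, not in the choice of switching.
\item If the root is resolved to $0$, there is no bad application and the output is optimal. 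Yet a union of disjoint sets was formed at each of the two parents, so your invariant ``union steps $\leq$ OPT $+$ \#bad'' gives $2\leq 1$.
\end{itemize}
Whichever way you count union steps, one of your two steps breaks on this three-leaf instance. With the unlucky root choice, the same instance is also a tight example with ratio exactly $2$, so no chaining is needed; your Step~3, however, exhibits no instance at all.

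What is missing is a lower bound on OPT that the algorithm itself certifies and that survives reticulation graphs, together with a charging of the extra cost to that bound.
\begin{itemize}
\item A bare count inequality cannot be pushed through a tree vertex. For that you need the Fitch--Hartigan invariant: $C'(w)$ is exactly the set of optimal root states of the processed part below $w$, and any other state costs exactly one more. In a network the parts below $u_1$ and $v_1$ share $v_r$, so ``OPT restricted to $N[v]$'' is not additive. Formulating the right invariant at reticulation graphs is the real work.
\item Your charging ``to a distinct mutation that any displayed tree must pay inside that reticulation graph'' is unsupported. The bad configuration is a property of the algorithm's labels, not of the optimum. An optimal solution may push that mutation below $v_r$, $w_1$ or $w_2$: if, say, the part below $v_r$ is a tree, giving its root a non-Fitch state costs exactly one extra mutation inside it.
\item Reticulation graphs of a tree-child network share tree vertices and edges; for instance, a sibling of $r$ may itself be a parent of another reticulation. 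Distinctness of the charged mutations therefore needs proof.
\item The exactness half needs an exchange argument for lines~19--25. You must show that attaching the reticulation's child below the parent whose other child has the larger intersection with it is never worse than the alternative, including its effect on the sets further up. Your plan does not address this, and it is where your ``main obstacle'' actually lives.
\end{itemize}
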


\SetAlgoSkip{0pt}

\begin{algorithm}[!t]
 \KwIn{A phylogenetic network $N\in\mathcal{N}$; a character~$C$ of $\Gamma$}
 \KwOut{A solution to the SPS for $N$ and $C$}
 \textbf{for} $v\in V_{\text{ext}}(N)$ \textbf{do} $C'(v)\leftarrow C(v)$\;
 \textbf{for} $v\in V_{\text{int}}(N)$ \textbf{do} $C'(v)\leftarrow\emptyset$\;
 Let $T$ be a copy of $N$, let $Q$ be a tree-priority queue and add the parents of leaves of $N$ to $Q$\;
  \Do{$Q\neq\emptyset$}{
  	$v\leftarrow\,\text{pop}(Q)$\;
	\eIf{$v$ is a tree vertex}{
		$w_1,w_2\leftarrow$ children of $v$\;
		\eIf{$C'(w_1)\cap C'(w_2)\neq\emptyset$}{
			$C'(v)\leftarrow C'(w_1)\cap C'(w_2)$\;
			\textbf{if} \emph{$v=\rho(N)$} \textbf{then} make $C'(v)$ a singleton by removing character states at random\;
			\textbf{if} \emph{$C'(v)$ is a singleton} \textbf{then} resolve$(T[v],C')$\;
		}{
			$C'(v)\leftarrow C'(w_1)\cup C'(w_2)$\;
		}
		Add the parent of $v$ to $Q$\;
	}{
		Let $u_1$ and $u_2$ be the parents of $v$ and let $w_1$ be the child of $v$\;
		\eIf{$v$ has two siblings}{
			Let $w_2$ and $w_3$ be the siblings of $v$ with parents $u_1$ and $u_2$, respectively, such that $|C'(w_1)\cap C'(w_2)|\geq|C'(w_1)\cap C'(w_3)|$ (possibly relabel $u_1,u_2$)\;
			\eIf{$C'(w_3)\neq\emptyset$ and $|C'(w_1)\cap C'(w_2)|>|C'(w_1)\cap C'(w_3)|$}{
				Let $u_2'$ be the parent of $u_2$\;
				$V(T)\leftarrow V(T)\setminus\{v,u_2\}$\;
				$E(T)\leftarrow E(T)\cup\{(u_1,w_1),(u_2',w_3)\}\setminus\{(u_1,v),(u_2,v),(v,w_1),(u_2',u_2),(u_2,w_3)\}$\;
				\textbf{if} $C'(w_1)\subset C'(w_2)$ \textbf{then} $C'(w_2)\leftarrow C'(w_1)$ and resolve$(T[w_2],C')$\;
				\textbf{if} $C'(w_1)\supset C'(w_2)$ \textbf{then} $C'(w_1)\leftarrow C'(w_2)$ and resolve$(T[w_1],C')$\;
				$C'(u_1)\leftarrow C'(w_1)$\;
			}{
				$C'(u_1)\leftarrow C'(w_1)\cup C'(w_2)$\; 
			}
			\textbf{if} \emph{$C'(w_3)\neq\emptyset$ and $|C'(w_1)\cap C'(w_2)|=|C'(w_1)\cap C'(w_3)|$} \textbf{then} $C'(u_2)\leftarrow C'(w_1)\cup C'(w_3)$\; 
		}{
			Make analogous updates to $T$, $C'$ and $Q$ as when $v$ has two siblings and let $u_2'$ be the parent of $u_2$\;
			$V(T)\leftarrow V(T)\setminus\{v,u_2\}$\;
			$E(T)\leftarrow E(T)\cup\{(u_1,w_1),(u_2',u_1)\}\setminus\{(u_1,v),(u_2,v),(v,w_1),(u_2',u_2),(u_2,u_1)\}$\;
		}
		Add the unprocessed parents and grandparents of $v$ and their unprocessed children to $Q$\;
	}
  }
  For all reticulation vertices $v$ in $T$, remove exactly one reticulation edge $(u,v)$ with $d_H(C'(u),C'(v))$ maximal and remove resulting edge subdivisions\;
  \Return{$(T,C')$}\;
\caption{approximation algorithm for the SPS}\label{SPSapprox}
\end{algorithm}


\begin{figure}[pos=!t,align=\centering]
\centering
\includegraphics[scale=0.33]{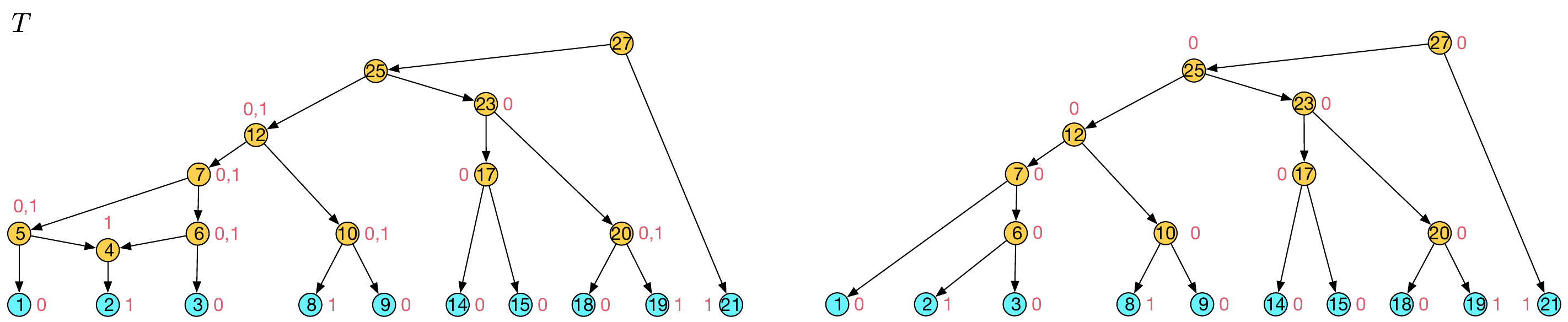}
\caption{Two intermediate solutions $T$ and $C'$ of Algorithm~\ref{SPSapprox} for network $N$ in Figure~\ref{example2} without taxa labels and a character of $\Gamma$. For all $v\in V(T)$, the order in which vertices are processed is shown in black and $C'(v)$ is shown in red.
}\label{example1}
\end{figure}

To illustrate Algorithm~\ref{SPSapprox} consider the phylogenetic network $N$ in Figure~\ref{example2} on ten taxa. Then, two intermediate solutions $T$ and $C'$ of Algorithm~\ref{SPSapprox} are shown in Figure~\ref{example1} when taking $N$ and the character shown in red on the leaves of $T$ as inputs. The graph on the left shows the state of $T$ and $C'$ before line~26 in Algorithm~\ref{SPSapprox} is applied for the first time. The graph on the right shows the solution to the SPS returned by Algorithm~\ref{SPSapprox}. Clearly, the output of Algorithm~\ref{SPSapprox} depends on the insertion order of vertices added to $Q$ in line~33. Since Proposition~\ref{prop::SPSapprox2} applies for any insertion order, we keep the ordering arbitrary but fixed throughout the article.

\begin{figure}[pos=!t,align=\centering]
\centering
\includegraphics[scale=0.33]{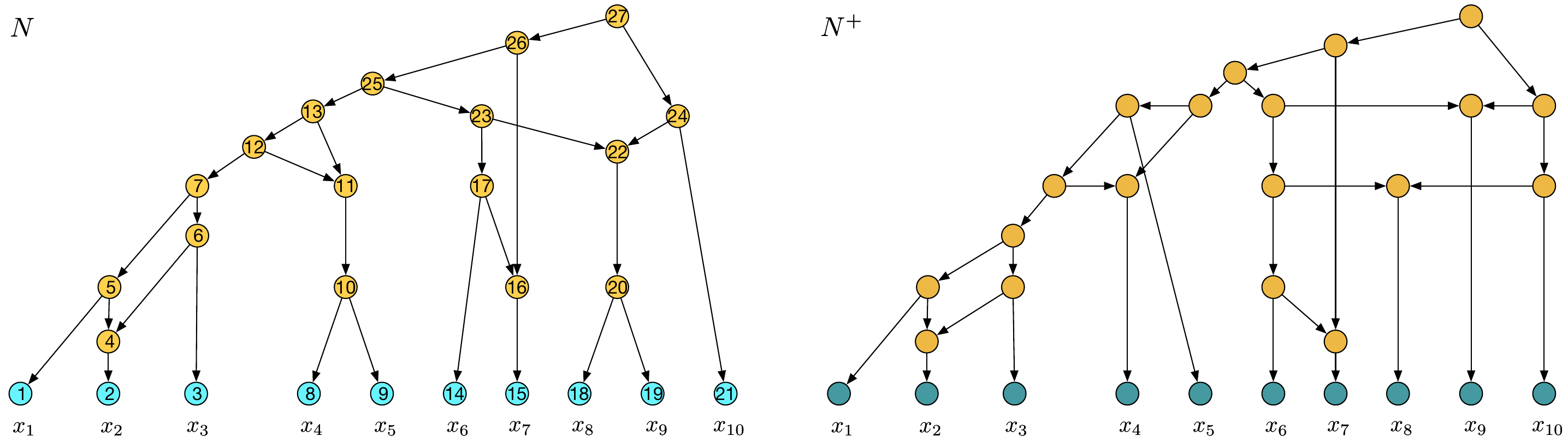}
\caption{The graph on the left is a semi-simplex phylogenetic network $N\in\mathcal{N}$. The graph on the right is a reticulation graph extension $N^+$ of $N$. 
}\label{example2}
\end{figure}
\begin{figure}[pos=!b,align=\centering]
\centering
\includegraphics[scale=0.37]{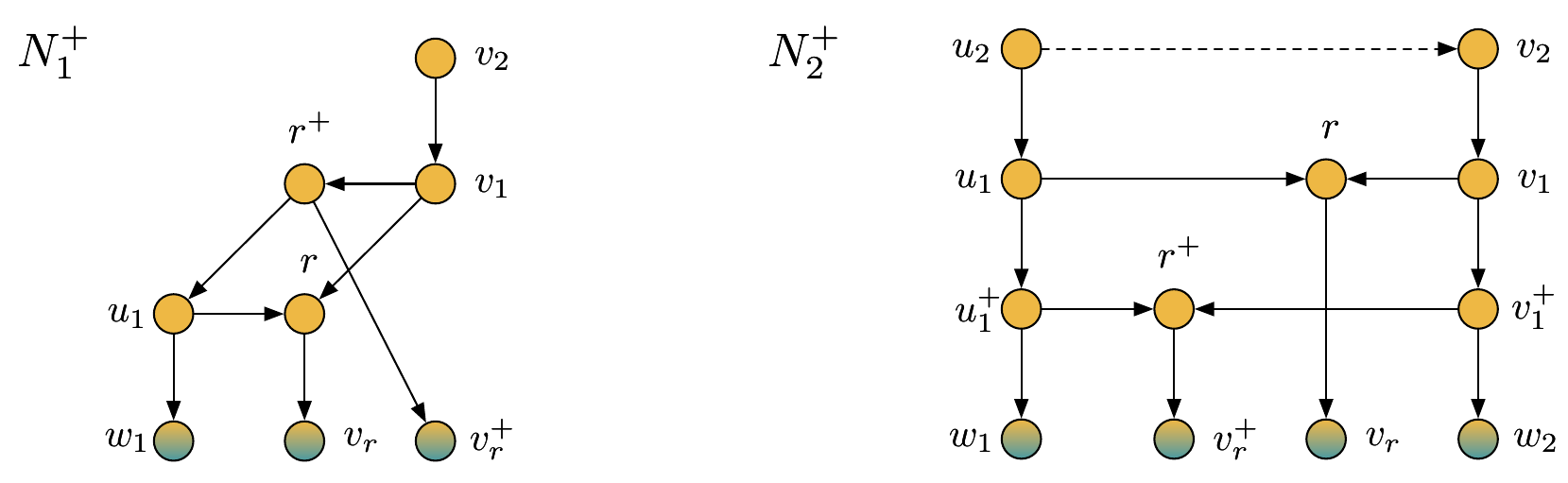}
\caption{The two extensions $N_1^+$ and $N_2^+$ of reticulation graphs $N_1$ and $N_2$ in Figure~\ref{treechild}, respectively.}\label{aux-softwired}
\end{figure}

While we have seen in Figure~\ref{intro} that solutions to the SPS and PPS can differ significantly, we will show in the next section how we can employ Algorithm~\ref{SPSapprox} to approximate the PPS for semi-simplex $N\in\mathcal{N}$ with an approximation factor of two. To this end, we construct auxiliary graphs which map solutions to the SPS for an extended phylogenetic network of $N\in\mathcal{N}$ and a character $C$ of $\Gamma$ to solutions to the PPS for $N$ and $C$. Specifically, for reticulation graphs $N_1$ and $N_2$ we define their \emph{extensions} $N_1^+$ and $N_2^+$, respectively, as shown in Figure~\ref{aux-softwired}. Then, for semi-simplex $N\in\mathcal{N}$, we define a \emph{reticulation graph extension} $N^+$ of $N$ as follows:
\begin{enumerate}[(i)]
\item for reticulation vertices $r\in V(N)$, consider the pendant subgraph $T$ of $N$ with $(r,\rho(T))\in E(N)$. $T$ is a subtree of $N$ because $N$ is semi-simplex. Replace $T$ by two rooted binary subtrees $T_1$ and $T_2$ of $T$ with non-intersecting leafsets $\Gamma_1$ and $\Gamma_2$, respectively, such that $\Gamma_1\cup\Gamma_2$ is the leafset of $T$, and $T_1$ and $T_2$ have the same root as $T$ (if it exists).
\item for reticulation graphs $N_i$ of $N$, $i\in\{1,2\}$, replace $N_i$ by its extension $N_i^+$ if the corresponding leafsets $\Gamma_1$ and $\Gamma_2$ in (i) are both non-empty. This means, for $i=1$, adding vertices $r^+,v_r^+$ and edges $(v_1,r^+),(r^+,u_1),(r^+,v_r^+)$, and, for $i=2$, adding vertices $r^+,v_r^+,u_1^+,v_1^+$ and edges $(u_1,u_1^+),(u_1^+,w_1),(u_1^+.r^+),(v_1,v_1^+),(v_1^+,w_w),(v_1^+.r^+),(r^+,v_r^+)$.
\item for $T_1,T_2$ with $\Gamma_1,\Gamma_2\neq\emptyset$ and $v_r$ from (i) and (ii), respectively, with $\rho(T_1)=\rho(T_2)=v_r$, identify the vertices $\rho(T_1)$ and $\rho(T_2)$ with vertices $v_r$ and $v_r^+$, respectively.
\end{enumerate}
An example of a reticulation graph extension $N^+$ is shown on the right in Figure~\ref{example2}. Observe that $N^+$ is not unique because of the free choice of leafsets $\Gamma_1$ and $\Gamma_2$ in (i) and the two possible identifications in (iii). The definition of reticulation graph extensions is motivated by their following property:
\begin{lem}\label{lem::PPSapprox2}
Let $N\in\mathcal{N}$ be semi-simplex and let $C$ be a character of $\Gamma$. Then, there exists a reticulation graph extension $N^+$ of $N$ such that the parsimony score of an optimal solution to the SPS for $N^+$ and $C$ is upper bounded by the parental parsimony score of an optimal solution to the PPS for $N$ and $C$.
\end{lem}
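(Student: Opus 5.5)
We prove the lemma by taking an optimal parentally displayed tree $T^*$ for $N$ and $C$, together with an optimal extension $C^*$, and building a reticulation graph extension $N^+$ \emph{adapted to} $T^*$. We then exhibit a tree displayed by $N^+$ whose parsimony score with respect to $C$ is at most score$(T^*,C^*)$. It suffices to find one displayed tree with a score extension no larger, because the optimum of the SPS for $N^+$ is a minimum over all displayed trees and their extensions.

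\textbf{Choosing $N^+$.} Since $N$ is semi-simplex, every reticulation $r$ has a single child $v_r$ whose pendant subgraph $T$ is a tree. Below $v_r$, the tree $T^*$ contains one or two lineages descending from $r$: one via each parent $u_1,u_2$ of $r$ when allopolyploidy or ILS is used at $r$, and one otherwise. Because $T^*$ preserves directed paths and degree constraints, the taxa of $T$ are partitioned into those reached in $T^*$ through the lineage entering via $u_1$ and those reached via $u_2$. We let $\Gamma_1$ and $\Gamma_2$ be these two sets, one possibly empty. The lineage of $T^*$ restricted to $\Gamma_i$ is a binary tree obtained from $T$ by pruning, up to suppression of degree-two vertices. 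This pruned tree is the subtree $T_i$ in step (i). In step (iii) we choose the identification so that $v_r^+$ (reachable through $r^+$, hence from the parent not used by the $v_r$-side in the switching) receives the lineage that $T^*$ routes through the other parent. When $\Gamma_1$ or $\Gamma_2$ is empty, $r$ stays an ordinary reticulation.

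\textbf{Building the displayed tree.} We choose a switching $S$ of $N^+$ as follows. At each extended reticulation graph $N_i^+$, keep the edge into $r$ from the parent that carries the $\Gamma_1$ lineage in $T^*$, and keep the edge into $r^+$ whose path matches how the $\Gamma_2$ lineage attaches in $T^*$. In $N_2^+$ the extra vertices $u_1^+$ and $v_1^+$ exactly allow the second lineage to branch off the edges $(u_1,w_1)$ or $(v_1,w_2)$. At non-extended reticulations we keep the edge used by $T^*$. We then compare $S$ with $T^*$. Outside the pendant subtrees, $S$ and $T^*$ coincide up to edge subdivisions. Inside, each lineage of $T^*$ corresponds to $T_1$ or $T_2$. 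Hence $S$, after suppressing subdivisions, is isomorphic to $T^*$ as a leaf-labelled tree. Transferring $C^*$ along this isomorphism, and giving subdivision vertices the state of their parent, yields an extension with score equal to score$(T^*,C^*)$. This gives the claimed bound. We use that $N^+$ is tree-child, so displayed trees are exactly switchings up to subdivision.

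\textbf{Main obstacle.} The hardest step is the case analysis showing that every way $T^*$ can parentally use a reticulation graph $N_1$ or $N_2$ is realised by one switching of $N_1^+$ or $N_2^+$. This includes the situations where the two lineages merge with the sibling subtrees $w_1$ and $w_2$ in different orders, and where $u_2=v_2$. We would handle it by enumerating the degree-feasible embeddings allowed by the definition of parental display at a single reticulation, using that reticulation depth one makes these local configurations independent. If some embedding, such as the two lineages crossing between the sibling branches, has no exact counterpart, we would instead argue via Proposition~\ref{prop::linfunc}. In that approach we would show that the lineage-function cost at $r$ is at least the number of state changes on the corresponding edges of $S$, giving the inequality rather than equality.
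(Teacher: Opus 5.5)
Your proof is correct, but it takes a different route from the paper's.

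\textbf{Your route.} You make $N^+$ mirror the optimal parental tree $T^*$ entirely. Reticulations at which $T^*$ uses one lineage are left unextended, which (ii) allows because one of $\Gamma_1,\Gamma_2$ is then empty. At reticulations with two lineages you take $\Gamma_1,\Gamma_2$ to be the two lineage leafsets. A suitable switching of $N^+$ is then isomorphic to $T^*$, and the bound holds with equality.

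\textbf{The deferred case analysis.} It goes through, and no fallback to Proposition~\ref{prop::linfunc} is needed.
\begin{itemize}
\item Since $N$ is semi-simplex, both parents of $r$ lie above every reticulation and so carry exactly one lineage.
\item By the tree-child property, each parent must send that lineage to its other child, whose leaves are reachable only through it.
\item Hence on $N_1$ and on $N_2$ there are exactly three embeddings: two that are displayed trees of $N_i$, and the parental tree $T_i$.
\item $N_i^+$ realises $T_i$ for either assignment of the two lineages to $\Gamma_1,\Gamma_2$. This is done by choosing which edge into $r$ to keep (and, in $N_2^+$, which edge into $r^+$), together with the identification in (iii).
\end{itemize}
Your ``crossing'' configuration is just the other assignment. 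The same observation justifies that $S$ and $T^*$ agree outside the pendant subtrees: a vertex above the reticulations that does not split in $T^*$ must have an unused reticulation child, which is exactly an edge dropped by the switching. You should state both of these facts explicitly.

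\textbf{The paper's route.} The paper extends even at reticulations where $T^*$ uses a single lineage. It takes $\Gamma_1$ monochromatic with state $C'(u_1)$ and $\Gamma_2$ the rest. It routes both $r$ and $r^+$ through the $u_1$-side, copies $C'(u_1)$ to $u_1^+$, and argues that this split of $T[v_r]$ does not increase the score.

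\textbf{What each route buys.} The paper replaces your isomorphism with a score comparison. In return, it aims at extensions driven by the character, like the one Algorithm~\ref{PPSapprox} builds in lines 3--5, which the proof of Proposition~\ref{equiv::PPS-SPS} feeds into this lemma. Your $N^+$ depends on the unknown optimum, including which reticulations are extended at all. It therefore settles the existential statement more cleanly, but gives no handle on that later use.
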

\begin{proof}
Let $T$ be a parentally displayed tree of $N$ and $C'$ an extension of $C$ on $V(T)$ such that score$(T,C')$ is minimum for $N$ and $C$. Let $N^+$ be a reticulation graph extension of $N$. We proof our claim by showing how to specify $N^+$, transform $T$ into a displayed tree of $N^+$ and extend $C'$ without increasing score$(T,C')$. Observe that $T$ constrained to any reticulation graph of $N$ is isomorphic to one of four distinct topologies: for $i\in\{1,2\}$, either a displayed tree of $N_i$ or $T_i$ in Figure~\ref{treechild}. Observe that, for $i\in\{1,2\}$, a displayed tree of $N_i$ in Figure~\ref{treechild} is not isomorphic to any displayed tree of $N_i^+$ in Figure~\ref{aux-softwired}. This means, to transform $T$ into a displayed tree of $N^+$ we need to replace the restriction of $T$ to $N_i$ by a displayed tree of $N_i^+$ for $i\in\{1,2\}$, respectively, whenever the restriction of $T$ to $N_i$ is not isomorphic to $T_i$. Without loss of generality $N^+$ only contains extensions $N_2^+$ and we obtain $T$ restricted to $N_2$ by removing reticulation edge $(v_1,r)$ in Figure~\ref{treechild}. Then, we transform $T$ into a displayed tree $T^+$ of $N^+$ such that we obtain $T^+$ restricted to $N_2^+$ by removing reticulation edges $(v_1,r)$ and $(v_1^+,r^+)$ in Figure~\ref{aux-softwired}. We choose an extension $C^+:V(T^+)\to S_p$ with $C^+(v)=C'(v)$ for all $v\in V(T)$ and set $C^+(u_1^+)=C'(u_1)$ for $u_1,u_1^+\in V(N_2^+)$. Hence, score$(T^+,C^+)>\,\text{score}(T,C')$ only if $d_H(C^+(u_1^+,v_r^+))=1$ for $v_r^+,u_1^+\in V(N_2^+)$. Therefore, we choose subtrees $T_1$ and $T_2$ of $T[v_r]$, $v_r\in V(N_2)$, in part (i) of the construction of $N^+$ such that $C(x)=C^+(u_1^+)$ for all $x\in\Gamma_1$ and $C(y)\neq C^+(u_1^+)$ for all $y\in\Gamma_2$. Then, score$(T[v_r],C')\geq\,\text{score}(T_1,C^+)+\,\text{score}(T_2,C^+)$ with equality if and only if $C(x)$ is constant for all taxa $x$ assigned to leaves of $T[v_r]$. This means, the identification of $\rho(T_1)$ and $\rho(T_2)$ with $v_r^+$ and $v_r$ in part (iii) of the construction of $N^+$, respectively, ensures that our specification of $N^+$ does not increase score$(T,C')$ and $d_H(C^+(u_1^+,v_r^+))=0$. Thus, we conclude score$(T^+,C^+)\leq\,\text{score}(T,C')$.
\end{proof}

While the parental parsimony score is upper bounded by the parsimony score on an phylogenetic network and character, note that the converse of Lemma~\ref{lem::PPSapprox2} to establish an equivalence between the SPS for $N^+$ and $C$ and the PPS for $N$ and $C$ does not hold because the character states of $u_1,v_r$ and $u_1^+,v_r^+$ in a displayed tree $T^+$ of $N^+$ restricted to $N_2^+$ can differ when minimizing the parsimony score of $T^+$ across all extensions of $C$ on $V(T^+)$. Hence, without further assumptions on the character states information is lost when transforming character state assignments from $N_2^+$ to $N_2$.

To simplify our notation we say that \emph{$(T^+,C')$ is a solution} to the SPS for $N^+$ and $C$ if $T^+$ is displayed by $N^+$ and~$C'$ is an extension of $C$ on $V(T^+)$. Similarily, we say that \emph{$(T,C')$ is a solution} to the PPS for $N$ and $C$ if $T$ is parentally displayed by $N$ and $C'$ is an extension of $C$ on $V(T)$. Let $\text{OPT}_{\text{softwired}}(N^+,C)$ denote the optimal objective function value of the SPS for $N^+$ and $C$, and let $\text{OPT}_{\text{parental}}(N,C)$ denote the optimal objective function value of the PPS for $N$ and $C$. In addition, for an induced subgraph $P^+$ of $N^+$ such that there exists an induced subtree $T^+$ of $P^+$ which is displayed by $N^+$ and $C':V\to 2^{S_p}$, we call $(T^+,P^+,C')$ a \emph{partial solution} to the SPS for $N^+$ and $C$. Then, for a partial solution $(T^+,P^+,C')$, we call
\begin{align*}
\text{ps-score}(T^+,P^+,C')=\min\left\{\text{score}(T^+,D')\,:\,D'\text{ is an extension of $C$ on }V(T^+)\text{ with }D'(v)\in C'(v)~\forall\,v\in V(T^+)\right\}
\end{align*}
the \emph{partial softwired parsimony score} of $T^+$, $P^+$ and $C'$. Finally, using the notation of Figure~\ref{aux-softwired} which labels vertices in $V(N^+)\setminus V(N)$ with $v^+$ for some $v\in V(N)$, we define $\lambda(T^+,P^+,C')$ as the lineage function $\lambda$ on $N$ such that
\begin{align*}
\lambda(v)&=\begin{cases}
C'(v^+) &\text{if }v^+\in V(T^+),~v\notin V(T^+),\\
C'(v) &\text{if }v^+\notin V(T^+),~v\in V(T^+),\\
C'(v^+)\cup C'(v) &\text{otherwise}
\end{cases}
\end{align*}
for all tree vertices $v\in V(N)$ and $\lambda(r)=\lambda(v_r)$ for all reticulation vertices $r\in V(N)$. We call $\lambda(T^+,P^+,C')$ also a (partial) solution to the PPS for $N$ and $C$, and we call
\begin{align*}
\text{pp-score}(T^+,P^+,C')=\sum_{v\in V(N)}w_{\lambda(T^+,P^+,C')}(v)
\end{align*}
the \emph{partial parental parsimony score} of $T^+$, $P^+$ and $C'$.

\begin{algorithm}[!t]
 \KwIn{A semi-simplex phylogenetic network $N\in\mathcal{N}$; a binary character~$C$ of $\Gamma$}
 \KwOut{A solution to the PPS for $N$ and $C$}
 \textbf{for} $v\in V_{\text{ext}}(N)$ \textbf{do} $C'(v)\leftarrow C(v)$\;
 \textbf{for} $v\in V_{\text{int}}(N)$ \textbf{do} $C'(v)\leftarrow\emptyset$\;
 \For{tree $T$ rooted in the child of a reticulation vertex of $N$ and there exist $v_1,v_2\in V_{\text{ext}}(T)$ with $C(v_1)\neq C(v_2)$}{
	bipartition $\Gamma_1\cup\Gamma_2$ the leafset of $T$ such that $C(x)$ is constant for all $x\in\Gamma_1$ and $C(x)\neq C(y)$ for all $x\in\Gamma_1$, $y\in\Gamma_2$\;
 }
Let $P^+$ be a reticulation graph extension of $N$ using the bipartitions from line~2 for property (i)\;
 Let $Q$ be a tree-priority queue and add the parents of leaves of $P^+$ to $Q$\;
  \Do{$Q\neq\emptyset$}{
  	$v\leftarrow\,\text{pop}(Q)$\;
	\textbf{if} \emph{$v$ is a tree vertex} \textbf{then} lines~7 to~14 from Algorithm~\ref{SPSapprox} with resolve$\_$parental$(P^+[v],C')$ instead of resolve$(T[v],C')$\;
	\If{$v$ is a reticulation vertex}{
		Let $u_1$ and $u_2$ be the parents of $v$ and let $w_1$ be the child of $v$\;
		\eIf{$v$ has two siblings}{
		Let $w_2$ and $w_3$ be the siblings of $v$ with parents $u_1$ and $u_2$, respectively, such that $|C'(w_1)\cap C'(w_2)|\geq|C'(w_1)\cap C'(w_3)|$ (possibly relabel $u_1,u_2$)\;
		\eIf{$C'(w_3)\neq\emptyset$ and $|C'(w_1)\cap C'(w_2)|>|C'(w_1)\cap C'(w_3)|$}{
			$V(P^+)\leftarrow V(P^+)\setminus\{v\}$; $E(P^+)\leftarrow E(P^+)\cup\{(u_1,w_1)\}\setminus\{(u_1,v),(u_2,v),(v,w_1)\}$\;
			\If{$u_2\in V(N)$ and $u_2^+\notin V(P^+)$}{
				Let $u_2'$ be the parent of $u_2$\;
				$V(P^+)\leftarrow V(P^+)\setminus\{u_2\}$; $E(P^+)\leftarrow E(P^+)\cup\{(u_2',w_3)\}\setminus\{(u_2',u_2),(u_2,w_3)\}$\;
			}
			\textbf{if} $C'(w_1)\subset C'(w_2)$ \textbf{then} $C'(w_2)\leftarrow C'(w_1)$ and resolve$\_$parental$(P^+[w_2],C')$\;
			\textbf{if} $C'(w_1)\supset C'(w_2)$ \textbf{then} $C'(w_1)\leftarrow C'(w_2)$ and resolve$\_$parental$(P^+[w_1],C')$\;
			$C'(u_1)\leftarrow C'(w_1)$\;
		}{$C'(u_1)\leftarrow C'(w_1)\cup C'(w_2)$\; }
		\textbf{if} $C'(w_3)\neq\emptyset$ and $|C'(w_1)\cap C'(w_2)|=|C'(w_1)\cap C'(w_3)|$ \textbf{then} $C'(u_2)\leftarrow C'(w_1)\cup C'(w_3)$\; 
		}{lines~30 to~32 from Algorithm~\ref{SPSapprox}\;}
		Add the unprocessed parents and grandparents of $v$ and their unprocessed children to $Q$\;
	}
  }
\Repeat{the loop (line~30) terminates}{
	\For{$v\in V_{\text{int}}(P^+)$}{
		\textbf{if} \emph{$v$ is a tree vertex, $v^+\notin V(P^+)$ and $C'(v)$ is a singleton} \textbf{then} resolve$\_$parental$(P^+[v],C')$\;
		\If{$C'(u)$ is a singleton for $(u,v)\in E(P^+)$ and at least one sibling of $v$ different from $v^+$ was added in line~5}{
			\uIf{$v$ is a reticulation vertex and $v^+\in V(P^+)$}{
				For $(u_1,v),(u_2,v)\in E(P^+)$, remove $(u_2,v)$ and $(u_1^+,v^+)$ such that\linebreak $|C'(u_1)\cap C'(v)|+|C'(u_2^+)\cap C'(v^+)|$ is minimal\;
			}
			\uElseIf{$v$ is a reticulation vertex and $v^+\notin V(P^+)$}{
				Let $c^+$ be the child of vertex $v^+$ associated with $v$ and added in line~5\;
				For $(u_1,v),(u_2,v)\in E(P^+)$, remove $(u_2,v)$ such that\linebreak $|C'(u_1)\cap C'(v)|+|C'(u_2^+)\cap C'(c^+)|$ is minimal\;
				\textbf{if} $(u_2^+,c^+)\notin E(P^+)$ \textbf{then} remove an edge with head $c^+$ and add $(u_2^+,c^+)$\;
			}
			\uElseIf{$v$ is a tree vertex and $v^+\in V(P^+)$}{
				Analogous to lines~33 to~36 with reversed roles for $v$ and $v^+$\;
			}
			resolve$\_$one$\_$reticulation$\_$graph$(P^+[u],C')$ for all $(u,v)\in E(N)$ and \textbf{break}\;
		}
		\If{$v$ is a reticulation vertex}{
		 remove exactly one reticulation edge $(u,v)$ with $d_H(C'(u),C'(v))$ maximal and \textbf{break}\;
		}
	}
}
Remove edge subdivisions in $P^+$\;
  \Return{$\lambda(P^+,P^+,C')$}\;
\caption{approximation algorithm for the PPS}\label{PPSapprox}
\end{algorithm}



\section{A 2-approximation algorithm for the PPS on binary, semi-simplex, tree-child phylogenetic networks}\label{sec:approx}
In this section we investigate the combinatorial similarities between the PPS and SPS to develop a polynomial time 2-approximation algorithm for the PPS for semi-simplex $N\in\mathcal{N}$ and character $C$ of $\Gamma$. First, we consider binary characters $C:V(N)\to S_1$. In this case, Algorithm~\ref{PPSapprox} is applicable and partly resembles Algorithm~\ref{SPSapprox}, drawing parallels between the approximation of the SPS and PPS. However, some crucial steps are different, including a new resolve procedure: for a rooted connected directed acyclic subgraph $P^+$ of a reticulation graph extension and a function $C': V(P^+)\to 2^{S_p}$, we define the operation resolve$\_$parental$(P^+,C')$ analogous to $\text{resolve}(P^+,C')$ with the additional constraints that the preorder traversal does not process edges $(a,b)\in E(P^+)$ for which $b$ is a reticulation vertex of $P^+$ or $b$ is a vertex added in part (ii) of the construction of a reticulation graph extension. Moreover, we define resolve$\_$one$\_$reticulation$\_$graph$(P^+,C')$ as a relaxed version of operation $\text{resolve$\_$parental}(P^+,C')$ by allowing for the violation of exactly one of the aforementioned additional constraints in the preorder traversal.

\begin{prop}\label{equiv::PPS-SPS}
Let $N\in\mathcal{N}$ be semi-simplex and let $C$ be a binary character of~$\Gamma$. Then, Algorithm~\ref{PPSapprox} yields a 2-approximation of the PPS for $N$ and $C$ in polynomial time.
\end{prop}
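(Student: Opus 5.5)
The plan is to chain three inequalities: a lower bound from Lemma~\ref{lem::PPSapprox2}, an upper bound from Proposition~\ref{prop::SPSapprox2}, and a comparison showing that the post-processing of Algorithm~\ref{PPSapprox} never increases the score. Concretely, I aim to show
\begin{align*}
\text{pp-score}(P^+,P^+,C') \leq \text{ps-score}(T^+,N^+,C'') \leq 2\,\text{OPT}_{\text{softwired}}(N^+,C) \leq 2\,\text{OPT}_{\text{parental}}(N,C).
\end{align*}
Here $N^+$ is the reticulation graph extension built in lines~3--5, and $(T^+,C'')$ is the SPS solution that Algorithm~\ref{SPSapprox} would return on $N^+$ and $C$.

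First, I will check that the bipartition in line~4 is exactly the choice made in the proof of Lemma~\ref{lem::PPSapprox2}. For binary $C$, $\Gamma_1$ collects one state and $\Gamma_2$ the other. The lemma's argument only requires that $\Gamma_1$ carries the state $C'(u_1)$ of the optimal parental solution; since there are two states, I take $\Gamma_1$ to be the class whose state is $C'(u_1)$. Because of the free identification in part (iii) of the construction, either labelling of the two classes is admissible, so line~5 fixes an $N^+$ realising the lemma's bound. That gives the last inequality. This is where binarity is used: with $p\geq 2$, $\Gamma_2$ would not be monochromatic and the argument would break.

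Second, the main loop (lines~6--26) coincides with Algorithm~\ref{SPSapprox} run on $N^+$, with two differences. Resolve is replaced by resolve$\_$parental, and a parent $u_2$ is kept whenever $u_2^+$ is present. I will argue by induction over the tree-priority queue order that the sets $C'(v)$ computed are the same as, or supersets consistent with, those of Algorithm~\ref{SPSapprox}. The key observation is that resolve$\_$parental only postpones fixing states below reticulations and added vertices. Consequently the invariant used in \citep{frohn25}, namely that the partial score of each processed pendant part is within a factor of two of the local optimum, still holds. Proposition~\ref{prop::SPSapprox2} then gives the middle inequality.

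The \emph{main obstacle} is the final repeat loop (lines~27--42), together with the conversion to a lineage function via $\lambda(P^+,P^+,C')$. For each reticulation graph extension $N_i^+$, I will do a local case analysis on the two branches of lines~31--36. In each branch, the removed edge pair is chosen to minimise the number of state changes, so the local contribution $\sum w_\lambda(v)$ over $v\in\{u_1,u_2,r,v_r\}$ is at most the number of mismatched edges that Algorithm~\ref{SPSapprox}'s final step would leave on the corresponding part of $N_i^+$. The point is that merging $C'(v)\cup C'(v^+)$ into $\lambda(v)$ charges $|\lambda(v)\setminus\bigcup\lambda(u)|$, and because $|\lambda(r)|\leq 2$ this charge equals the count of new states, each of which is witnessed by a mismatched edge. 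I will also verify the feasibility condition $|\lambda(v)|\leq\sum_{(u,v)\in E}|\lambda(u)|$, so that no weight is infinite, using that each $v_r$ and $v_r^+$ receives one parent. Proposition~\ref{prop::linfunc} then makes the output a valid PPS solution, which yields the first inequality.

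Polynomial running time follows for two reasons. Each vertex enters $Q$ $O(1)$ times and each resolve call is linear. Each iteration of the repeat loop deletes at least one edge or makes a state set a singleton, so there are at most $O(|V(N)|)$ iterations.
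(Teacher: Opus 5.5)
Your chain has the same shape as the paper's, $\text{pp-score}\le\text{ps-score}\le 2\,\text{OPT}_{\text{softwired}}(N^+,C)\le 2\,\text{OPT}_{\text{parental}}(N,C)$. The difference is that you take the middle term from a \emph{separate} run of Algorithm~\ref{SPSapprox} on $N^+$, so your first inequality compares the outputs of two different algorithms.

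That comparison needs a step-by-step coupling of the two runs, and ``the same sets, or supersets consistent with them'' does not provide one. Because resolve$\_$parental stops at reticulations and at the vertices added in part (ii), some $C'$-values stay larger than in Algorithm~\ref{SPSapprox}. A larger set can flip a later intersection/union test: $\{0,1\}\cap\{1\}\neq\emptyset$ but $\{0\}\cap\{1\}=\emptyset$. It can also flip the comparison of $|C'(w_1)\cap C'(w_2)|$ with $|C'(w_1)\cap C'(w_3)|$. So the two runs can delete different reticulation edges, and there is no well-defined ``corresponding part'' of Algorithm~\ref{SPSapprox}'s output to compare against.

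Your second paragraph also does not prove the middle inequality as you wrote it. Given $N^+\in\mathcal{N}$, that inequality is just Proposition~\ref{prop::SPSapprox2} applied to $N^+$. What you actually sketch is a factor-two invariant for Algorithm~\ref{PPSapprox}'s own partial solutions, which is the paper's Claim~2. The consistent route is the paper's: compare pp-score and ps-score on the same partial solution $(T^+,P^+,C')$ of Algorithm~\ref{PPSapprox} (Claim~1), and prove the factor-two bound for that same object (Claim~2).

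The decisive gap is in the repeat loop. Algorithm~\ref{SPSapprox}'s last step removes the worse edge at each reticulation independently, so on an extension $N_2^+$ it may keep $r$ and $r^+$ on the same side. Algorithm~\ref{PPSapprox} excludes exactly these configurations so that its output is parentally displayed by $N$. When $r^+$ was already removed in the main loop, it enforces this by deleting an edge into $c^+$ and inserting $(u_2^+,c^+)$.

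The rewired tree is not one Algorithm~\ref{SPSapprox} could output, and the paper notes that its ps-score can increase. So your claim that ``each branch removes the edge pair minimising state changes'' does not bound the local pp-weight by the mismatches Algorithm~\ref{SPSapprox} would leave. This is where the paper's proof does its real work, in Case~4:
\begin{enumerate}[(a)]
\item It uses $C'(v_r)\cap C'(v_r^+)=\emptyset$, a consequence of the bipartition in line~4, to rule out Case~4.2.
\item In Cases~4.3.1 and~4.3.2 it argues that after the rewiring the new assignment is locally optimal, so the roles of $v_r$ and $v_r^+$ can be exchanged in the guarantee of Proposition~\ref{prop::SPSapprox2}. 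Otherwise the contribution of $T_2^+$ at most doubles. In both situations the paper argues that the bound against $2\,\text{OPT}_{\text{softwired}}(N^+,C)$ survives.
\end{enumerate}
Without an argument of this kind, your first inequality is unsupported.

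A smaller point: Lemma~\ref{lem::PPSapprox2} is existential, and its proof picks the identification in (iii) from the unknown optimal parental solution. So ``either labelling is admissible'' does not by itself show that the extension fixed in line~5 attains the bound.
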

\begin{proof}
Algorithm~\ref{PPSapprox} constructs a reticulation graph extension $P^+$ of $N$ in lines~3 to~5 and subsequently removes reticulation edges and resulting edge subdivisions until the algorithm terminates. Notice that the initial construction of $P^+$ fits into Lemma~\ref{lem::PPSapprox2}. Hence, after line~5 we have
\begin{align*}
\text{OPT}_{\text{softwired}}(P^+,C)\leq\,\text{OPT}_{\text{parental}}(N,C).
\end{align*}
Lines~7 to~28 are similar to lines~4 to~34 in Algorithm~\ref{SPSapprox} and differ only in two aspects. First, resolve$\_$parental restricts the resolution of character state sets of procedure resolve in Algorithm~\ref{SPSapprox} such that the preorder traversal terminates when encountering a reticulation vertex or one of edges $(u_1,u_1^+)$ or $(v_1,v_1^+)$ in extension $N_2^+$ of reticulation graph $N_2$ (see Figure~\ref{aux-softwired}). The continuation of this truncated traversal is detailed in lines~29 to~44 which we will discuss later. Secondly, lines~20 to~22 in Algorithm~\ref{SPSapprox} are substituted by lines~15 to~18 in Algorithm~\ref{PPSapprox}. This means, the parent $u_2$ of a reticulation vertex $v$ is kept as an edge subdivision in $P^+$ when reticulation edge $(u_2,v)$ is removed only if $u_2$ is in~$N_2^+$, i.e. $(u_2,u_2^+)\in E(P^+)$. 

After line~28, we conclude that $C'(v)$ is a non-empty character state (set) for all $v\in V(P^+)$ and $C'$ has been propagated and resolved analogously to Algorithm~\ref{SPSapprox} up to the aforementioned truncation. The modifications of $P^+$ are analogous to the modifications of $T$ in Algorithm~\ref{SPSapprox} except for the lack of removal of edge subdivisions in extensions $N_2^+$. Next, lines~29 to~44 remove one reticulation edge incident to each remaining reticulation vertex in $P^+$ and after line~45 $P^+$ contains no edge subdivisions, i.e., Algorithm~\ref{PPSapprox} terminates with $P^+$ as a displayed tree of the reticulation graph extension constructed in lines~3 to~5. Notice that lines~33 to~40 ensure that $P^+$ is parentally displayed by $N$ after termination by excluding induced subtrees of $N_2^+$ containing either edges $(u_1,v_r)$, $(u_1^+,v_r^+)$ or edges $(v_1,v_r)$, $(v_1^+,v_r^+)$. Also, the resolution of $C'$ is completed: if a reticulation edge outside of $N_2^+$ is removed, then line~29 continues the previously truncated preorder traversal. Otherwise, the removal of one or two reticulation edges in an extension $N_2^+$ leads to a continuation of the traversal in line~41 truncated at any further occurence of a reticulation vertex or another extension $N_2^+$. The order of if-statements (lines~33, 35 and~39) ensures that the repeat-loop terminates with $C'$ being an extension of $C$ on $V(P^+)$. Thus, $(P^+,C')$ is a solution to the PPS for $N$ and $C$ after line~45.

Now, we are left to show that the solution $(P^+,C')$ after line~45 approximates the PPS for $N$ and $C$ with a factor of two. To this end, we consider an induced subtree $T^+$ of the intermediate solution $P^+$ at any point of Algorithm~\ref{PPSapprox} such that $T^+$ is parentally displayed by $N$. We prove the following two claims:
\begin{description}
\item[Claim 1:] the inequality
\begin{align*}
\text{pp-score}(T^+,P^+,C')\leq\,\text{ps-score}(T^+,P^+,C')
\end{align*}
is an invariant when a reticulation graph or its extension is fully processed by Algorithm~\ref{PPSapprox}. 
\item[Claim 2:] for $N^+=P^+$ after line~5, bound
\begin{align*}
\text{ps-score}(T^+,P^+,C')\leq 2\cdot\text{OPT}_{\text{softwired}}(N^+,C)
\end{align*}
is not obstructed by processing any reticulation graph or its extension.
\end{description}
If claim~2 holds, then after line~45 we get
\begin{align*}
\text{ps-score}(P^+,P^+,C')\leq 2\cdot\text{OPT}_{\text{softwired}}(N^+,C)
\end{align*}
from Proposition~\ref{prop::SPSapprox2} because lines~29 to~44 in Algorithm~\ref{PPSapprox} are an extension of line~35 in Algorithm~\ref{SPSapprox} (in addition to processing edge subdivisions) and we have already seen that the modifications of $P^+$ in Algorithm~\ref{PPSapprox} until line~28 are analogous to the modifications of $T$ in Algorithm~\ref{SPSapprox} (except for the treatment of edge subdivisions). Thus, using claims~1 and~2 we can conclude our proof with $P^+$ after line 45:
\begin{align}\label{final::PPS2approx}
\text{pp-score}(P^+,P^+,C')\leq\,\text{ps-score}(P^+,P^+,C')\leq 2\cdot\text{OPT}_{\text{softwired}}(N^+,C)\leq 2\cdot\text{OPT}_{\text{parental}}(N,C).
\end{align}

To prove claims~1 and~2, consider a reticulation vertex $v=\,\text{pop}(Q)$ in line~5. Hence, $v$ induces a reticulation graph (extension) $G$ in $P^+$. By definition of tree-priority queue $Q$ the character state(s) (sets) of the child and at least one sibling of $v$ assigned by $C'$ are well-defined. In the following case distinction we apply modifications of $G$ by Algorithm~\ref{PPSapprox} in the notation of Figures~\ref{treechild} and~\ref{aux-softwired}.

\begin{description}
\item[Case 1:] $G=N_1$ (see Figure~\ref{treechild}). Then, $C'(w_1)$ and $C'(v_r)$ are propagated to vertex $v_2$ in $N_1$. Subsequently, for a partial solution $(T^+,P^+,C')$ such that $T^+$ is parentally displayed by $N$, we have
\begin{align}
w_{\lambda(T^+,P^+,C')}(r)&=\left|C'(v_r)\setminus\left(C'(u_1)\cup C'(v_1)\right)\right|\leq\min\left\{\left|C'(v_r)\setminus C'(u_1)\right|,\left|C'(v_r)\setminus C'(v_1)\right|\right\}\nonumber\\
&=\min\left\{d_H(C'(v_r),C'(u_1)),d_H(C'(v_r),C'(v_1))\right\}.\label{G::Case1}
\end{align}
Hence, $\text{pp-score}(T^+,P^+,C')\leq\,\text{ps-score}(T^+,P^+,C')$ is an invariant. Furthermore, Algorithm~\ref{SPSapprox} yields the same reticulation edge present in $G$. Thus, this case does not obstruct bound $\text{ps-score}(T^+,P^+,C')\leq 2\cdot\text{OPT}_{\text{softwired}}(P^+,C)$ from Proposition~\ref{prop::SPSapprox2}.
\item[Case 2:] $G=N_2$ and $G$ is not an induced subgraph of an extension $N_2^+$ in $P^+$.
\begin{description}
\item[Case 2.1:] $C'(w_2)=\emptyset$ when $r$ is processed. Then, $C'(w_1)$ and $C'(v_r)$ are propagated to vertex $u_2$ in $N_2$ and no reticulation edge in~$G$ is removed. Hence, $r$ remains a reticulation vertex until it is processed in lines~42 and~43. Thus, inequality~\eqref{G::Case1} holds and we can draw the same conclusions as in Case~1.
\item[Case 2.2:] $C'(w_2)\neq\emptyset$ when $r$ is procssed. Then, either no reticulation edge in~$G$ is removed and arguments analogous to Case 2.1 apply or one reticulation edge in~$G$ is removed and arguments analogous to Case~1 apply.
\end{description}
\item[Case 3:] $G=N_1^+$ (see Figure~\ref{aux-softwired}). Then, exactly one sibling of $r$ is processed before reticulation vertex $r$ is processed and $r$ is processed before $r^+$. Hence, analogous arguments to Case 2.1 apply and therefore one reticulation edge incident to $r$ is removed in line~43. Then, $P^+$ induces a tree $T_1^+$ displayed by $G$. Observe that $T_1^+$ is isomorphic to tree $T_1$ in Figure~\ref{treechild}. Therefore, by extending $T_1$ to a subtree $T^+$ of $P^+$ which is parentally displayed by $N$, we conclude that
\begin{align*}
w_{\lambda(T^+,P^+,C')}(r)&=\left|\left(C'(w_3)\cup C'(w_4)\right)\setminus\left(C'(u_1)\cup C'(v_1)\right)\right|\\
&\leq\min\left\{\left|C'(w_3)\setminus C'(u_1)\right|+\left|C'(w_4)\setminus C'(v_1)\right|,\left|C'(w_4)\setminus C'(u_1)\right|+\left|C'(w_3)\setminus C'(v_1)\right|\right\}\\
&=\min\left\{d_H(C'(a),C'(u_1))+d_H(C'(b),C'(v_1))\,:\,a,b\in\{w_3,w_4\},\,a\neq b\right\}.
\end{align*}
Hence, $\text{pp-score}(T^+,P^+,C')\leq\,\text{ps-score}(T^+,P^+,C')$ is an invariant. Since $r$ and $r^+$ are processed and analysed independently, bound $\text{ps-score}(T^+,P^+,C')\leq 2\cdot\text{OPT}_{\text{softwired}}(P^+,C)$ from Proposition~\ref{prop::SPSapprox2} is not obstructed.
\item[Case 4:] $G=N_2^+$. Then, line~32 can hold for only one vertex $v\in V(G)$, namely reticulation vertex $r$ or its child $v_r$ if $r$ has been removed before, because the procedure resolve$\_$parental cannot traverse edges $(u_1,u_1^+),(v_1,v_1^+)\in E(G)$.
\begin{figure}[pos=!t,align=\centering]
\centering
\includegraphics[scale=0.37]{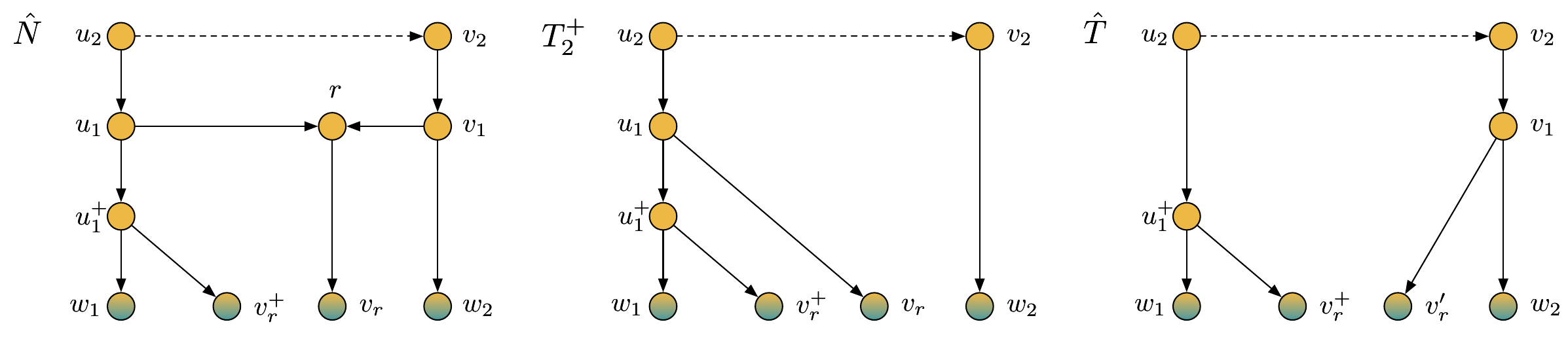}
\caption{An induced subgraph $\hat{N}$ of the reticulation graph extension $N_2^+$ from Figure~\ref{aux-softwired} and two trees $T_2^+$ and $\hat{T}$ displayed by $\hat{N}$.}\label{type2-aux-noIso}
\end{figure}
\begin{description}
\item[Case 4.1:] $P^+$ induces subtree $T_2^+$ of $G$ which is isomorphic to $T_2$ in Figure~\ref{treechild} after line~28. Then, none of the if-conditions in lines~33, 35 and 39 holds and line~41 continues the previously truncated preorder traversal to resolve $C'(w)$ for $w\in V_{\text{ext}}(G)$. Thus, we arrive at the same conclusion as in Case~2.
\item[Case 4.2:] $P^+$ induces subtree $T_2^+$ of $G$ which is not isomorphic to $T_2$ in Figure~\ref{treechild} after line~28. Without loss of generality $(u_1,v_r),(u_1^+,v_r^+)\in E(T_2^+)$ (see Figure~\ref{type2-aux-noIso}). We deduce that $|C'(v_r^+)\cap C'(w_1)|>|C'(v_r^+)\cap C'(w_2)|$ and $|C'(v_r)\cap C'(u_1^+)|>|C'(v_r)\cap C'(w_2)|$ because $P^+$ induces $T_2^+$ after the do-while-loop. This means, $C'(u_1^+)=C'(v_r^+)\cap C'(w_1)$ (by applying lines~19 to~21). Since $C'(v_r)\cap C'(v_r^+)=\emptyset$ by our construction of the reticulation graph extension in lines~3 to~5, $C'(v_r)\cap C'(u_1^+)=\emptyset$ follows, contradicting the strict inequality $|C'(v_r)\cap C'(u_1^+)|>|C'(v_r)\cap C'(w_2)|$. Thus, this case cannot occur.
\item[Case 4.3:] $P^+$ does not induce a subtree of $G$ after line~28. Without loss of generality the if-condition in line~35 holds. Then, $P^+$ induces the graph $\hat{N}$ in Figure~\ref{type2-aux-noIso} (as an induced subgraph of $G$). 
If line~37 transforms $\hat{N}$ into $\hat{T}$ in the same figure, then line~38 does not modify $P^+$. Hence, line~37 reflects line~35 in Algorithm~\ref{SPSapprox} and we can draw the same conclusion as in Case 4.1 because $\hat{T}$ is isomorphic to $T_2$ in Figure~\ref{treechild}. However, if line~37 transforms $\hat{N}$ into $T_2^+$ in Figure~\ref{type2-aux-noIso}, then line~38 modifies $P^+$ by removing and adding an edge. The edge addition differentiates this case from all previous cases because, for an induced subtree $T^+$ of $P^+$ parentally displayed by $N$, ps-score$(T^+,P^+,C')$ possibly increases in line~38. Since $C'(v_r^+)\neq C'(v_r)$ by construction, we know that $C'(u_1)\neq C'(v_r)$ or $C'(u_1)\neq C'(u_1^+)$ or $C'(u_1^+)\neq C'(v_r^+)$. 
\begin{description}
\item[Case 4.3.1:] $C'(u_1)\neq C'(u_2)$. If $C'(u_1)\neq C'(v_r)$, then $C'(u_1)=C'(u_1^+)$ because $C'(u_1)$ was initially chosen as the union or intersection of $C'(u_1^+)$ and $C'(v_r)$ and cannot be resolved by $C'(u_2)$. This means, replacing $T_2^+$ by $\hat{T}$ and setting $C'(v_1)=C'(v_2)$ cannot increase ps-score$(T^+,P^+,C')$. If $C'(u_1)\neq C'(u_1^+)$, then we can apply the same argument because replacing $(u_2,u_1),(u_1,u_1^+)$ by $(u_2,u_1^+)$ decreases ps-score$(T^+,P^+,C')$. Moreover, by the minimality in line~37, ps-score$(T^+,P^+,C')$ cannot decrease when replacing $T_2^+$ by $\hat{T}$. Thus, we draw the same conclusion as if $\hat{T}$ is an induced subgraph of $G$. If $C'(u_1^+)\neq C'(v_r^+)$, then line~38 applies to transform $T_2^+$ into a parentally displayed tree $T$ of $N$ which satisfies the minimality of line~37. By our choice of $G$ and Proposition~\ref{prop::SPSapprox2} we know that the propagation of character states from $C'(u_1^+)$, $C'(v_r)$ and $C'(w_2)$ to $C'(u_2)$ and $C'(v_2)$ before line~29 does not guarantee local optimality. However, by the minimality of line~37 we now have $C'(w_1)=C'(u_1^+)=C'(u_1)=C'(v_r)$ which means the choice of $C'(u_1)$ is locally optimal (by invoking that $C'(u_2)$ cannot resolve $C'(u_1)$). Thus, we can reverse the role of $v_r$ and $v_r^+$ as one having a locally optimal character state (set) and one not to achieve the same approximation guarantee for solutions $(T_2^+,C')$ and $(T,C')$ according to Proposition~\ref{prop::SPSapprox2}.
\item[Case 4.3.2:] $C'(u_1)=C'(u_2)$. If $C'(u_1)\neq C'(v_r)$, then arguments analogous to Case~4.3.1 apply. Hence, assume $C'(u_1)=C'(v_r)$. If $C'(u_1)\neq C'(u_1^+)=C'(v_r^+)$, then by the minimality of line~37 we have $C'(w_1)=C'(v_r^+)$. This means, the contribution of $T_2^+$ to ps-score$(T^+,P^+,C')$ can at most double when applying line~38. Case $C'(u_1)\neq C'(u_1^+)\neq C'(v_r^+)$ is excluded by the minimality of line~37 and for $C'(u_1)=C'(u_1^+)\neq C'(v_r^+)$, line~37 does not affect ps-score$(T^+,P^+,C')$. Thus, we can draw the same conclusions as in Case~4.3.1.
\end{description}
\end{description}
\end{description}
We conclude that claims~1 and~2 hold for all reticulation graphs and their extensions. Thus, inequalities~\eqref{final::PPS2approx} hold.
\end{proof}

\begin{algorithm}[!t]
\KwIn{Positive integers $k,l$ with $k\leq l$; a set of taxa $\Gamma=\{1,\dots,n\}$; a rooted, binary phylogenetic tree $T$ of $\Gamma$; a $k$-character~$C$ of $\Gamma$}
\KwOut{An optimal solution to the $(k,l)$-PPS for $T$ and $C$}
\textbf{for} $v\in V_{\text{ext}}(T)$ \textbf{do} $\lambda(v)\leftarrow C(v)$\;
\textbf{for} $v\in V_{\text{int}}(T)$ \textbf{do} $\lambda(v)\leftarrow\emptyset$\;
Let $Q$ be a queue and add the parents of leaves of $T$ to $Q$\;
\Do{$Q\neq\emptyset$}{
	$v\leftarrow\,\text{pop}(Q)$\;
	$w_1,w_2\leftarrow$ children of $v$ (if possible $\lambda(w_1)$ is a set of character state sets)\;
	\uIf{$\lambda(w_1)$ is a set of character state sets and there exists $c\in\lambda(w_1)$ such that $c\in \lambda(w_2)$}{
		$\lambda(v)\leftarrow c$\;
	}
	\uElseIf{$\lambda(w_1)$ and $\lambda(w_2)$ are character state sets and $|\lambda(w_1)\cup \lambda(w_2)|\leq l$}{
		$\lambda(v)\leftarrow \lambda(w_1)\cup \lambda(w_2)$\;
	}
	\uElse{
		\textbf{for} $c_1\in \lambda(w_1),\,c_2\in \lambda(w_2)$, $S\subset c_1\cup c_2$ with $|S|=l$ and $c_1\cap c_2\subseteq S$ \textbf{do} $\lambda(v)\leftarrow \lambda(v)\cup S$\;
	}
	\If{$v=\rho(T)$}{
		Make $\lambda(v)$ a character state set by removing elements from $\lambda(v)$ at random\;
	}
	\If{$\lambda(v)$ is a character state set}{
		Set $\lambda(w_i)=\,\text{arg\,max}\left\{|\lambda(v)\cap c|\,:\,c\in \lambda(w_i)\right\}$, $i\in\{1,2\}$, recursively in preorder\;
	}
}
\Return{$\lambda$}\;
\caption{exact solution algorithm for the $(k,l)$-PPS}\label{refine-and-cut}
\end{algorithm}

Notice that Proposition~\ref{equiv::PPS-SPS} does make use of the binary property of the given character $C$ only when Algorithm~\ref{PPSapprox} constructs~$P^+$ uniquely with respect to leafsets $\Gamma_1$ and $\Gamma_2$ (see lines~3 to~5). If $C$ is not binary, then it is unclear which choices for sets $\Gamma_1$ and $\Gamma_2$ do not obstruct optimality in the construction of an approximate solution to the PPS for $N$ and $C$. We formalize this nested subproblem by using the lineage function formulation of the PPS in Proposition~\ref{prop::linfunc}: for a positive integer $k$, we call a function $C:\Gamma\to 2^{S_p}$ with $|C(x_i)|\leq k$ for all $x_i\in\Gamma$ a \emph{$k$-character} of $\Gamma$. Then, for a rooted, binary phylogenetic network $N$ of $\Gamma$, a $k$-character $C$ of $\Gamma$ and an integer $l\geq k$, we call
\begin{align*}
\min\left\{\sum_{v\in V(N)}w_{\lambda}(v)\,:\,\lambda\text{ is a p-width lineage function on }N,~\lambda(x)=C(x)~\forall\,x\in\Gamma,~|\lambda(\rho(N))|\leq l\right\}
\end{align*}
the $(k,l)$-PPS for $N$ and $C$. Observe that for some phylogenetic tree $T$ and some $k$-character $C$ of the leafset of $T$, the $(1,2)$-PPS for $T$ and $C$ equates to our nested subproblem when defining a reticulation graph extension using $T$. Hence, in the following we show how to solve the $(k,l)$-PPS for $T$ and $C$. Subsequently, we show that Proposition~\ref{equiv::PPS-SPS} holds for all characters of $\Gamma$ by solving the $(1,2)$-PPS to determine bipartitions $\Gamma_1\cup\Gamma_2$ in line~4 of Algorithm~\ref{PPSapprox}. To simplify the notation in Algorithm~\ref{refine-and-cut}, for sets $A$, we write $a\in A$ whenever $a\in A$ or $a=A$.

\begin{prop}\label{prop::kPPS}
Algorithm~\ref{refine-and-cut} is correct.
\end{prop}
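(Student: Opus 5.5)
We prove that Algorithm~\ref{refine-and-cut} returns an optimal lineage function for the $(k,l)$-PPS on a tree $T$. On a tree every non-root vertex has a unique parent $u$, so
\begin{align*}
w_\lambda(v)=|\lambda(v)\setminus\lambda(u)|
\end{align*}
subject to $|\lambda(v)|\leq|\lambda(u)|$. The objective therefore decomposes over edges, and a Fitch/Sankoff-style dynamic programming argument is natural.

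The proof is by induction over the postorder in which vertices leave $Q$. For each vertex $v$ we maintain the following invariant. After $v$ is processed, $\lambda(v)$ is either a single state set or a family of state sets of size at most $l$. Moreover, for every feasible assignment $c$ to $v$, the optimal cost of $T[v]$ with $\lambda(v)=c$ is at least $m_v+\min_{c'\in\lambda(v)}|c'\setminus c|$, where $m_v$ is the optimal subtree cost. Every $c'\in\lambda(v)$ attains $m_v$ itself. In words, $\lambda(v)$ is exactly the set of optimal ``upward signatures'' of $T[v]$, in analogy with Fitch's candidate sets.

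The inductive step follows the three branches of lines~7--13.
\begin{enumerate}[(a)]
\item If some candidate $c$ is shared by both children, choosing $\lambda(v)=c$ makes both child edges cost $0$. Since costs are nonnegative, this is optimal and dominates every other choice.
\item If both children are single sets with union of size at most $l$, setting $\lambda(v)=\lambda(w_1)\cup\lambda(w_2)$ again costs $0$ on both edges. The union is also the unique inclusion-minimal zero-cost choice, so it serves as the signature for ancestors.
\item Otherwise every choice incurs a positive cost. Since $w_\lambda$ only counts states of the child missing from the parent, an optimal $\lambda(v)$ never benefits from states outside $c_1\cup c_2$, and it should be as large as possible, namely of size $l$. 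It must also keep $c_1\cap c_2$, since each such state saves two units. A short exchange argument then shows that the family of all $S$ in line~13 contains every optimal choice, and all its members have equal cost $|c_1\cup c_2|-l$ plus the children's costs.
\end{enumerate}

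The root step (line~14) uses the constraint $|\lambda(\rho)|\leq l$ and the fact that $w_\lambda(\rho)=0$, so any candidate is optimal. The top-down refinement in line~16 picks, for each child, the candidate maximizing the overlap with the parent. By the invariant, this realizes $\min_{c'}|c'\setminus\lambda(v)|$ for that child, because all candidates have the same size $l$ or are singletons of equal cost. This completes a traceback producing an optimal $\lambda$. The feasibility condition $|\lambda(w)|\leq|\lambda(v)|$ holds automatically, since every constructed parent set contains or has size at least that of its children.

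The main obstacle is case (c). There we must justify two points: first, that keeping the full family of size-$l$ sets suffices for optimality higher in the tree, not only locally; second, that an ancestor never prefers a smaller set. I would handle both with an exchange lemma. Given an optimal solution whose $\lambda(v)$ is not in the family, we either add states from $c_1\cup c_2$ to it or swap out states outside $c_1\cup c_2$. Each such move does not increase $w_\lambda$ at $v$'s children. It also does not increase $w_\lambda(v)$ relative to the parent, because the parent's set can be modified in the same way, using the assumption that parent sets have size at most $l$.
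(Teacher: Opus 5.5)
Your route is the paper's: induction up the tree with a case split along the branches of lines~7--12, but with a sharper Fitch-type invariant. Your treatment of (b), and of (c) for two single state sets (the paper's Cases~1--2), is fine. The gap is that both halves of your invariant fail for the $\lambda$ the algorithm actually builds once a child carries a \emph{family} of state sets (the paper's Cases~4--5).

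In (c), line~12 ranges over \emph{all} pairs $c_1\in\lambda(w_1)$, $c_2\in\lambda(w_2)$, and $|c_1\cup c_2|-l$ depends on the pair. So ``all members have equal cost'' and ``every $c'\in\lambda(v)$ attains $m_v$'' are false. Your exchange argument only gives the converse inclusion, namely that optimal sets lie in the family. For an example, take $k=1$, $l=2$, and $\rho$ with children $w_1,w_2$, where $w_1$ carries cherries with leaf states $0,1$ and $2,3$, and $w_2$ carries cherries with leaf states $3,4$ and $5,6$. Then $\lambda(w_1)$ and $\lambda(w_2)$ are all $2$-subsets of $\{0,1,2,3\}$ and of $\{3,4,5,6\}$, with no common member. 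The pair $(\{0,1\},\{4,5\})$ puts $\{0,4\}$ into $\lambda(\rho)$. If line~14 keeps $\{0,4\}$, each of the states $1,2,3$ below $w_1$ and $3,5,6$ below $w_2$ is absent from the root and must enter inside that subtree, so the output costs at least $6$. But $\lambda(\rho)=\lambda(w_1)=\{0,3\}$, $\lambda(w_2)=\{3,4\}$, with the cherries unchanged, costs $5$. Line~16 has the same defect, since its arg max sees only the overlap with the parent, not the candidate's own subtree cost.

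Case (a) fails dually. Line~8 commits to \emph{one} shared candidate although several may exist, so ``dominates every other choice'' and the lower-bound half of your invariant break. Let $\rho$ have children $v$ and a cherry with leaf states $2,3$, and let $v$ have children $w_1$ (cherries $0,1$ and $2,3$) and $w_2$ (cherries $0,2$ and $1,3$). Both families are all $2$-subsets of $\{0,1,2,3\}$. If line~8 picks $\{0,1\}$, line~16 fixes $\lambda(w_1)=\lambda(w_2)=\{0,1\}$ and the output costs $6$, whereas putting $\{2,3\}$ at $\rho,v,w_1,w_2$ costs $4$. Here $T[v]$ attains $m_v=4$ with $v$ labelled $\{2,3\}$, although the only candidate kept, $\{0,1\}$, has $|\{0,1\}\setminus\{2,3\}|=2$.

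Hence the algorithm as literally written is not correct, and no argument can close this gap. The paper's proof does not supply the missing step either, since it dismisses exactly these sub-cases as ``analogous to Case~2 or Case~3''. A provable version first needs a modified algorithm in which $\lambda(v)$ keeps all and only optimal candidates, for instance with line~12 restricted to pairs minimizing $|c_1\cup c_2|$ and line~8 keeping every shared set instead of choosing one. It then still needs a genuine proof of your lower bound (optimal cost of $T[v]$ with $v$ labelled $d$ is at least $m_v+\min_{c'\in\lambda(v)}|c'\setminus d|$) for such families; this does not follow from your single-pair computation in (c).
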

\begin{proof}
First, lines~1 and~2 ensure that $\lambda$ is a well-defined lineage function. For all $v\in V(T)$, $\lambda(v)$ is re-defined at most twice: either in line~8, 10 or 12 and in line~14 or~16. We prove our claim by induction on~$n$. The base case is covered by line~1. Observe that Algorithm~\ref{refine-and-cut} never constructs a character state set of cardinality larger than $l$. Hence, constraint $|\lambda(\rho(T))|\leq l$ is satisfied. Let $C_{|v}$ and $\lambda_{|v}$ denote the restriction of $C$ and $\lambda$, respectively, to the leafset of $T[v]$ and, for children $w_i$, $i\in\{1,2\}$ in line~6, let $\lambda_{|i}$ denote the restriction of $\lambda$ to $V(T[w_i])$. If $\lambda_{|i}$ does not map to sets of character state sets, then let $\mu_{|i}=\lambda_{|i}$. Otherwise, let $\mu_{|i}$ denote a $l$-character we obtain by applying lines~13 to~16 to $\lambda_{|i}$ for $T[w_i]$ instead of $T$. Then, we assume by induction hypothesis that $\mu_{|i}$ is an optimal solution to the $(k,l)$-PPS for $T[w_i]$ and $C$ restricted to $V_{\text{ext}}(T[w_i])$.
\begin{description}
\item[Case 1:] $\lambda(w_1)$ and $\lambda(w_2)$ are character state sets and $|\lambda(w_1)\cup \lambda(w_2)|\leq l$. Then, $\mu_{|i}=\lambda_{|i}$, $i\in\{1,2\}$, and $\lambda(v)=\lambda(w_1)\cup \lambda(w_2)$ ensures that $\lambda_{|v}$ is a $l$-character (see line~10). 
\item[Case 2:] $\lambda(w_1)$ and $\lambda(w_2)$ are character state sets and $|\lambda(w_1)\cup \lambda(w_2)|>l$. In this case, we consider all subsets $S\subset \lambda(w_1)\cup \lambda(w_2)$ with $|S|=l$ and $\lambda(w_1)\cap \lambda(w_2)\subseteq S$ (see line~12). Since Algorithm~\ref{refine-and-cut} never constructs a character state set of cardinality larger than $l$, at least one set $S$ exists. Furthermore, every set~$S$ minimizes $\sum_{v\in V(T[v])}w_{\lambda}(v)$ among all subsets of $\lambda(w_1)\cup \lambda(w_2)$ of cardinality $l$ because of $\mu_{|i}=\lambda_{|i}$ and the optimality of $\mu_{|i}$, $i\in\{1,2\}$.
\item[Case 3:] $\lambda(w_1)$ is a set of character state sets, $\lambda(w_2)$ is a character state set and $\lambda(w_2)\in \lambda(w_1)$. Then, we set $\lambda(v)=\lambda(w_2)$ in line~8 and $\lambda(w_1)=\lambda(w_2)$ in line~16.
\item[Case 4:] $\lambda(w_1)$ is a set of character state sets, $\lambda(w_2)$ is a character state set and $\lambda(w_2)\notin \lambda(w_1)$. Analogous to Case~2.
\item[Case 5:] $\lambda(w_1)$ and $\lambda(w_2)$ are sets of character state sets. Analogous to either Case~2 or Case~3.
\end{description}
In total, by the optimality of $\mu_{|1}$ and $\mu_{|2}$ we conclude that applying lines~13 to~16 to $\lambda_{|v}$ for $T[v]$ instead of $T$ yields an optimal solution to the $(k,l)$-PPS of $T[v]$ and $C_{|v}$. 
\end{proof}

Notice that the optimal solution to the $(k,l)$-PPS for $T$ and $C$ returned by Algorithm~\ref{refine-and-cut} is a lineage function $\lambda$ such that for children $w_1,w_2$ of any vertex~$v$ in tree~$T$ we have $\lambda(w_1)\cap\lambda(w_2)\subseteq\lambda(v)\subseteq\lambda(w_1)\cup\lambda(w_2)$. In the following we translate this lineage function into a collection of phylogenetic trees and character extensions: for $m=|\lambda(\rho(T))|$, we define rooted, binary phylogenetic trees $T_1,\dots,T_m$ induced by $T$ and extensions $C_i'$ of $C$ on $V(T_i)$, $i\in\{1,\dots,m\}$ as follows: first, we define a map $C':V(T)\to S_p$ to decompose $\lambda$. To this end, traverse vertices $v\in V(T)$ in preorder and, for $(v,w_1),(v,w_2)\in E(T)$, set $C'(w_1)$ and $C'(w_2)$ equal to $s_1\in\lambda(w_1)$ and $s_2\in\lambda(w_2)$, respectively, such that either
\begin{enumerate}[(i)]
\item $s_1=s_2=C'(v)$ or
\item $s_1\notin\lambda(v)$, $s_2=C'(v)$ or
\item $s_1=C'(v)$, $s_2\notin\lambda(v)$.
\end{enumerate}
Otherwise, $\lambda(w_j)\subseteq\lambda(v)\setminus\{C'(v)\}$ for some $j\in\{1,2\}$ because $C'(v)\in\lambda(w_1)$ or $C'(v)\in\lambda(w_2)$ by construction. In this case, we keep $C'(w_j)$ undefined and do not continue with the traversal for vertex $w_j$. Then, let $T_1$ be the phylogenetic tree induced by vertices $v\in V(T)$ for which $C'(v)$ is well-defined and let $C_1'$ be the restriction of $C'$ to vertices in $V(T_1)$. Furthermore, the lineage function $\mu:V(T)\to 2^{S_p}$ defined by 
\begin{align*}
\mu(v)&=\begin{cases}
\lambda(v)\setminus\{C'(v)\} &\text{if $C'(v)$ is well-defined},\\
\lambda(v) &\text{otherwise}
\end{cases}~&~&\forall\,v\in V(T)
\end{align*}
satisfies
\begin{align*}
\sum_{v\in V(T)}w_{\mu}(v)=\sum_{v\in V(T)}w_{\lambda}(v)-\,\text{score}\left(T_1,C_1'\right).
\end{align*}
This means, we recursively decompose $\mu$ like we did $\lambda$ to define $T_2,\dots,T_m$ and $C_2',\dots,C_m'$ which satisfy
\begin{align*}
\sum_{v\in V(T)}w_{\lambda}(v)=\sum_{i=1}^m\text{score}\left(T_i,C_i'\right).
\end{align*}
We denote split$(T,\lambda)=\{T_i,C_i'\}_{i=1,\dots,m}$.

\begin{prop}\label{prop::approx2depth1}
Let $N\in\mathcal{N}$ be semi-simplex and let $C$ be a character of~$\Gamma$. Then, there exists a polynomial time 2-approximation algorithm for the PPS for $N$ and $C$.
\end{prop}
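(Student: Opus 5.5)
The plan is to reduce the general-character case to the binary-character case of Proposition~\ref{equiv::PPS-SPS}. The only place where Algorithm~\ref{PPSapprox} uses that $C$ is binary is line~4, where it chooses the bipartition $\Gamma_1\cup\Gamma_2$ of the leafset of each pendant tree $T$ hanging below a reticulation vertex $r$. We replace this step. For each such $T$ (a tree, because $N$ is semi-simplex), we run Algorithm~\ref{refine-and-cut} on $T$ with the $1$-character $x\mapsto\{C(x)\}$ and $(k,l)=(1,2)$. By Proposition~\ref{prop::kPPS} this returns an optimal lineage function $\lambda_T$ on $T$ with $|\lambda_T(\rho(T))|\leq 2$. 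We then apply split$(T,\lambda_T)$ to obtain at most two trees $T_1,T_2$ and extensions $C_1',C_2'$ with
\begin{align*}
\sum_{v\in V(T)}w_{\lambda_T}(v)=\text{score}(T_1,C_1')+\text{score}(T_2,C_2').
\end{align*}
Their leafsets give the bipartition $\Gamma_1\cup\Gamma_2$ required in part~(i) of the definition of a reticulation graph extension. If $|\lambda_T(\rho(T))|=1$, we take $\Gamma_2=\emptyset$ and do not extend the reticulation graph. All remaining steps of Algorithm~\ref{PPSapprox} are kept unchanged.

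Next we re-establish Lemma~\ref{lem::PPSapprox2} for this particular extension $N^+$, namely $\text{OPT}_{\text{softwired}}(N^+,C)\leq\text{OPT}_{\text{parental}}(N,C)$. We follow the proof of Lemma~\ref{lem::PPSapprox2}. Take an optimal parentally displayed tree and its extension, and restrict to each reticulation graph. Below $r$, the optimal PPS solution induces a lineage function on $T$ whose root set $\lambda(v_r)$ has at most two states, since $r$ has two parents. Its cost on $T$ is therefore at least the optimal $(1,2)$-PPS value, which is attained by split$(T,\lambda_T)$. We then assign the roots of $T_1$ and $T_2$ to $v_r$ and $v_r^+$ according to which root state matches $C'(u_1)$ or $C'(v_1)$ in the optimal solution. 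The edges entering $v_r$ and $v_r^+$ then cost no more than the corresponding incoming lineage costs $w_\lambda(r)$. In the binary case this step was immediate because the bipartition was forced; here it follows from the optimality of $\lambda_T$.

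With the lemma in place, we rerun the argument of Proposition~\ref{equiv::PPS-SPS}. Claims~1 and~2 are local to reticulation graphs and their extensions. Cases~1--3 do not use binarity. Case~4 uses it only through the fact $C'(v_r)\cap C'(v_r^+)=\emptyset$ in Case~4.2. This fact still holds, because split assigns distinct root states, $s_1\neq s_2$, to $T_1$ and $T_2$ whenever both are nonempty. The cases in 4.3 compare singleton states with $d_H$, and this works for any alphabet $S_p$. Chaining the inequalities then gives inequality~\eqref{final::PPS2approx}.

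Polynomial running time follows because Algorithm~\ref{refine-and-cut} with $l=2$ keeps sets of at most $O(p^2)$ candidate pairs per vertex. Split, the construction of $N^+$, and Algorithm~\ref{PPSapprox} are polynomial. I expect the main obstacle to be the modified Lemma~\ref{lem::PPSapprox2}. When the optimal PPS solution uses root states below $r$ that differ from those chosen by $\lambda_T$, we must show by an exchange argument that swapping in $\lambda_T$ costs no more in total, counting both the subtree cost and the two incoming edges. I would first try bounding the change in the incoming-edge cost by at most $1$ per differing root state, and absorbing it into the strict gain in subtree cost guaranteed by the optimality of $\lambda_T$.
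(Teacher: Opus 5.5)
Your reduction is the same as the paper's. The paper's proof replaces line~4 of Algorithm~\ref{PPSapprox} by the leafsets of split$(T,\lambda)$, for the $(1,2)$-PPS solution returned by Algorithm~\ref{refine-and-cut}, and then simply appeals to Proposition~\ref{equiv::PPS-SPS}. You correctly see that this appeal needs Lemma~\ref{lem::PPSapprox2} for the \emph{specific} extension $N^+$ built from that split. However, your argument for that lemma fails, and the gap is not a technicality.

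Optimality of $\lambda_T$ only says that its cost on $T$ is at most the cost on $T$ of the optimal PPS solution; it gives no strict gain. Moreover, $\lambda_T$ is computed from $T$ alone, with a random choice among candidate pairs at the root and free choices inside split. So the states it places at $v_r,v_r^+$ need not match those at $u_1,v_1$, which leaves up to two extra mutations with nothing to absorb them.

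The inequality can in fact fail. Let $\rho$ have children $u_1,v_1$. Let $u_1$ have children $r$ and a leaf of state $a$, and let $v_1$ have children $r$ and a leaf of state $b$. Let the child $v_r$ of $r$ be the root of $T=((x_1,x_2),(x_3,x_4))$ with $C(x_1)=a$, $C(x_2)=c$, $C(x_3)=b$, $C(x_4)=d$. This $N$ is binary, tree-child and semi-simplex.
\begin{itemize}
\item $\text{OPT}_{\text{parental}}(N,C)=3$. Four leaf states and a singleton root lineage force cost at least $3$. This is attained by $\lambda(u_1)=\{a\}$, $\lambda(v_1)=\{b\}$, $\lambda(r)=\lambda(v_r)=\{a,b\}$, with $\{a,c\}$ at the parent of $x_1,x_2$ and $\{b,d\}$ at the parent of $x_3,x_4$.
\item On $T$ alone, every root pair from $\{a,b,c,d\}$ has $(1,2)$-cost $2$. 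So $\lambda_T$ with root set $\{a,c\}$ is optimal.
\item Split with $C'(v_r)=a$ and state $b$ at the parent of $x_3,x_4$ is permitted by rule (iii). It yields $\Gamma_1=\{x_1,x_3\}$ and $\Gamma_2=\{x_2,x_4\}$.
\item In every tree displayed by this $N^+$, the leaves $x_1,x_3$ form a cherry. The path from $x_1$ to the other $a$-leaf and the path from $x_3$ to the other $b$-leaf both pass through the cherry's parent. Hence the character is not convex, and $\text{OPT}_{\text{softwired}}(N^+,C)\geq 4>3$.
\end{itemize}

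So the modified lemma is false for admissible runs, and the chain \eqref{final::PPS2approx} cannot be closed along your route. What is missing is either a bipartition coordinated with the states that $u_1,v_1$ eventually receive, or an accounting that does not compare against a fixed $N^+$. The paper's one-line proof does not supply this ingredient either. You have located the real weak point, but your proposal leaves it open.

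A smaller point concerns Case~4.2. There, $C'(v_r)$ and $C'(v_r^+)$ are the sets produced by the bottom-up propagation over the leaves of $T_1$ and $T_2$, not the split root states $s_1\neq s_2$. Once $T_1,T_2$ are not monochromatic, their disjointness needs its own argument.
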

\begin{proof}
Consider Algorithm~\ref{PPSapprox}. Specifically, let $T$ be a tree in line~3 and let $C_{|T}$ denote the restriction of $C$ to the leafset of $T$. Then, let $\lambda$ be the lineage function returned by Algorithm~\ref{refine-and-cut} for $k=1$, $l=2$, $T$ and $C_{|T}$. Next, change the construction of bipartition $\Gamma_1\cup\Gamma_2$ of the leafset of $T$ in line~4 such that $\Gamma_1$ and $\Gamma_2$ are distinct leafsets of the two trees in split$(T,\lambda)$. Thus, our claim follows from Proposition~\ref{equiv::PPS-SPS}.
\end{proof}

We denote calls to the algorithm in Proposition~\ref{prop::approx2depth1} for $N\in\mathcal{N}$ and character $C$ of $\Gamma$ by \emph{ApproxPPS$(N,C)$}. Notice that ApproxPPS is well-defined even if $N$ not semi-simplex, i.e., $N$ has a reticulation depth of at least two.

\section{An exact solution algorithm for the PPS on binary, tree-child phylogenetic networks}\label{sec:ilp}

In this section, we give a branch-\&-bound algorithm for the PPS on rooted, binary, tree-child phylogenetic networks based on a new integer programming formulation of the PPS on rooted, binary networks and an agreement measure between its LP relaxation and algorithm ApproxPPS. To this end, for a rooted, binary phylogenetic network $N$ of $\Gamma$, a lineage function $\lambda$ on $N$, $v\in V(N)$ and $s\in S_p$, we introduce binary decision variables $a_v^s$ such that
\begin{align*}
a_v^s=\begin{cases}
1 &\text{if $s\in\lambda(v)$},\\
0&\text{otherwise},
\end{cases}
\end{align*}
and binary decision variables $c_v^s$ such that $\sum_{s\in S_p}c_v^s=w_{\lambda}(v)$ if $v\neq\rho$ and $w_{\lambda}(v)\neq\infty$. Then, given a character $C$ of $\Gamma$, we consider the following integer program:
\begin{form}\label{form::bpps}
\begin{align}
\min~~\sum_{v\in V_{\text{int}},\,v\neq\rho,\,s\in S_p}c_{v}^s&+\sum_{v\in V_{\text{ext}}}c_{v}\label{obj::BPPS}\\
\text{s.t.}~~~~~~~~~~~~~~~~~~~~~~~~~~c_{v}^s&\geq a_v^s-\sum_{(u,v)\in E}a_u^s~&~&\forall\,v\in V_{\text{int}},\,v\neq\rho,\,s\in S_p\label{bpps::con1}\\
c_v&\geq 1-a_u^{C(v)}~&~&\forall\,(u,v)\in E,\,v\in V_{\text{ext}}\label{bpps::con2}\\
0&\geq \sum_{s\in S_p}\left(a_v^s-\sum_{(u,v)\in E}a_u^s\right)~&~&\forall\,v\in V_{\text{int}},\,v\neq\rho\label{bpps::con3}\\
0&\geq 1-\sum_{s\in S_p}a_u^s~&~&\forall\,(u,v)\in E,\,v\in V_{\text{ext}}\label{bpps::con4}\\
0&\leq 1-\sum_{s\in S_p}a_\rho^s\label{bpps::con5}\\
c_{v}&\in\{0,1\}~&~&\forall\,v\in V_{\text{ext}}\nonumber\\
c_{v}^s&\in\{0,1\}~&~&\forall\,v\in V_{\text{int}},\,v\neq\rho,\,s\in S_p\nonumber\\
a_v^s&\in\{0,1\}~&~&\forall\,v\in V_{\text{int}},\,s\in S_p\nonumber
\end{align}
\end{form}
Constraints~\eqref{bpps::con1} enforce $c_v^s=1$ only if $s\in\lambda(v)$ and $s\notin\lambda(u)$ for all $(u,v)\in E$. Hence, by definition of $w_\lambda(v)$ we ensure that $\sum_{s\in S_p}c_v^s=w_{\lambda}(v)$ if $v\in V_{\text{int}},\,v\neq\rho$ and $w_{\lambda}(v)\neq\infty$. The latter if-condition is established by constraints~\eqref{bpps::con3}. Constraints~\eqref{bpps::con2} and~\eqref{bpps::con4} function analogously to constraints~\eqref{bpps::con1} and~\eqref{bpps::con3}, respectively, by leveraging the fact that we set $\lambda(v)=\{C(v)\}$ for all $v\in V_{\text{ext}}$. Therefore, objective function~\eqref{obj::BPPS} encodes function $\sum_{v\in V}w_\lambda(v)$. Finally, constraint~\eqref{bpps::con5} ensures that $|\lambda(\rho)|=1$. Thus, Formulation~\ref{form::bpps} is valid for the PPS on rooted, binary phylogenetic networks by Proposition~\ref{prop::linfunc}.

\begin{algorithm}[!t]
\KwIn{A phylogenetic network $N\in\mathcal{N}$; a character $C$ of $\Gamma$}
\KwOut{An optimal solution to the PPS on $N$ and $C$}
$Q\leftarrow\emptyset$\;
$W\leftarrow\infty$\;
\For{$v\in V(N)$, $s\in S_p$}{
	Set counters $n_{v,s}^L$, $n_{v,s}^U$ and pseudo-cost scores $\text{cost}_{v,s}^L$, $\text{cost}_{v,s}^U$ equal to $0$\;
}
Let $(a,c)$ be an optimal solution to the LP relaxation of Formulation~\ref{form::bpps} for $N$ and $C$\;
$\lambda\leftarrow\text{ApproxPPS}(N,C)$\;
\eIf{$W(\lambda_a)<W(\lambda)$}{
	Push Node$(a,c,\lambda,\emptyset)$ to $Q$\;
}{
	$\lambda^*\leftarrow\lambda$\;
	$W\leftarrow W(\lambda_a)$
}

\While{$Q\neq\emptyset$}{
	Pop Node$(a,c,\lambda,F)$ from $Q$\;
	\If{$W(\lambda_a)<W$}{
	\If{$W(\lambda)<W$}{
		$\lambda^*\leftarrow\lambda$\;
		$W\leftarrow W(\lambda)$\;
	}
	\uIf{$(a,c)$ is binary}{
		$\lambda^*\leftarrow\lambda_a$\;
		$W\leftarrow W(\lambda_a)$\;
	}
	\ElseIf{$W(\lambda_a)<W(\lambda)$}{
		\For{non-binary variables $a_v^s$}{
			For $G=F\cup\{(v,s)\}$, calculate LP solution $(a(G),c(G))$ and approximate solution $\lambda_G$\;
			$\text{nodes}(v,s)\leftarrow\text{Node}(a(G),c(G),\lambda_G,G)$\;
			$\Delta_{v,s}^L\leftarrow\left(n_{v,s}^L\cdot\,\Delta_{v,s}^L+W(\lambda_{a(G)})-W(\lambda_a)\right)/(n_{v,s}^L+1)$\;
			$\Delta_{v,s}^U\leftarrow\left(n_{v,s}^U\cdot\,\Delta_{v,s}^U+W(\lambda)-W(\lambda_G)\right)/(n_{v,s}^U+1)$\;
			Increase $n_{v,s}^L$ and $n_{v,s}^U$ by one\;
		}
		For any $s\in S_p$, push nodes$(v,s)$ with $\frac{1}{2(p+1)}\left(\sum_{t\in S_p}\Delta_{v,t}^L+\sum_{t\in S_p}\Delta_{v,t}^U\right)$ maximum to $Q$\;
	}
	}
}
\Return{$\lambda^*$}\;
\caption{exact solution algorithm for the PPS}\label{PPSexact}
\end{algorithm}

To get to an exact solution solution for the PPS, for a solution $(a,c)$ to Formulation~\ref{form::bpps} or its LP relaxation, we define a lineage function $\lambda_a$ on $N$ by 
\begin{align*}
\lambda_a(v)&=\argmax\left|\left\{s\,:\,s\in S_p,\,a_v^s=1\right\}\right|~&~&\forall\,v\in V(N).
\end{align*}
In addition, for a lineage function $\lambda$ on $N$ we write $W(\lambda)=\sum_{v\in V(N)}w_{\lambda}(v)$. We will employ strong branching~\cite{cook95} to solve the PPS, i.e., we compare an intermediate LP lower bound $W(\lambda_a)$ with the LP lower bound obtained by branching on a non-binary variable value and subsequently choose as the branching variable the one with the largest improvement in lower bound $W(\lambda_a)$. Strong branching is known to produce small branch-\&-bound search trees and it serves as a useful building block for more sophisticated branching strategies~\cite{aardal24}. To measure the improvement by strong branching, we denote an optimal solution to the LP relaxation of Formulation~\ref{form::bpps} under the additional constraints $a_v^s=1$ for some $v\in V(N)$, $s\in S_p$, decoded as a set $F$ of tuples $(v,s)$, by $(a(F),c(F))$. In addition, we write $\lambda_F$ for the lineage function returned by ApproxPPS$(N.C)$ under the additional constraints $s\in\lambda(v)$ for all $(v,s)\in F$. The latter measures the improvement in upper bounds which can contribute to pruning the search tree further. Since the disadvantage of strong branching is the high cost of solving many auxiliary linear programs, we combine our strong branching rule with pseudo-cost branching. Coupled with a capacity constraint on the number of strong branches this approach is also known as reliability branching~\cite{achterberg05}. Specifically, we re-use the differences $W(\lambda_{a(F)})-W(\lambda)$ we calculate for strong branching to determine pseudo-cost scores per branching variable based on running averages over differences $W(\lambda_{a(F)})-W(\lambda)$. Preliminary computational experiments showed that the LP relaxation of Formulation~\ref{form::bpps} often contains half-integral values for many variables $a_v^s$. Hence, we aggregate pseudo-cost scores per vertex $v$ and do not differentiate between the different states $s\in S_p$ for branching. Thus, Algorithm~\ref{PPSexact} is a branch-\&-bound algorithm for the PPS on $N\in\mathcal{N}$ adapting both LP lower bounds and upper bounds obtained from ApproxPPS for pruning.

\section{Computational experiments}\label{sec:exp}
In this section we study the performance of the LP relaxation of Formulation~\ref{form::bpps}, algorithm ApproxPPS and Algorithm~\ref{PPSexact} on simulated data. Each simulated instance consists of a network $N\in\mathcal{N}$ and a character $C$ of $\Gamma$. Specifically, $N$ is constructed by first generating a phylogenetic tree of $\Gamma$ with simulator Ngesh (version 1.2.1) with default parameters~\cite{ngesh21}, and subsequently inserting edges by sampling pairs of vertices uniformly at random to meet given requirements on a fixed number of reticulation vertices and a fixed reticulation depth while keeping $N$ binary and tree-child. The characters $C$ are constructed by uniformly at random assigning states from $S_p$ to each taxon in $\Gamma$.

All the implementations described have been coded in Python version 3.12.13 by relying on FICO Xpress Optimizer libraries v47.01.01 for linear programming. All experiments were conducted on a MacBook Pro with M2 chip, 8-core CPU and 10-core GPU, 16 GB RAM and operation system macOS Tahoe (version 26.5.1) using Darwin kernel (version 25.5.0). The link to the online repository containing all implemented algorithms can be found at \url{https://github.com/mfrohn/ApproxPPS}.

\subsection{Empirical results}
\begin{table}[!t]
\normalsize
\begin{tabular}{cccrrrrrrrr}
\toprule
 \multicolumn{3}{c}{Data set} & \multicolumn{2}{c}{LP relaxation} & \multicolumn{2}{c}{ApproxPPS} &  \multicolumn{2}{c}{Exact} \\
 \multicolumn{3}{c}{\rule[1mm]{4cm}{0.2mm}}  & \multicolumn{2}{c}{\rule[1mm]{3cm}{0.2mm}} & \multicolumn{2}{c}{\rule[1mm]{3cm}{0.2mm}} & \multicolumn{2}{c}{\rule[1mm]{3cm}{0.2mm}}\\
Taxa & Reticulations & States & Time (sec) & Gap (\%) & Time (sec) & Factor & Time (sec) & Branches\\
\toprule
50 & 5 & 2 & 0.02 & 0.77 & 0.002 & 1.09 & 0.21 & 1\\
 & & 4 & 0.02 & 0.94 & 0.002 & 1.07 & 0.85 & 5\\
 & 20 & 2 & 0.02 & 5.84 & 0.003 & 1.41 & 6.48 & 11\\
 & & 4 & 0.02 & 3.05 & 0.004 & 1.28 & 12.15 & 16\\
100 & 10 & 2 & 0.02 & 0.22 & 0.004 & 1.08 & 0.49 & 1\\
& & 4 & 0.02 & 0.24 & 0.005 & 1.09 & 0.60 & 1\\
& 40 & 2 & 0.02 & 13.17 & 0.01 & 1.37 & 115.81 & 61\\
& & 4 & 0.04 & 1.72 & 0.01 & 1.21 & 114.57 & 44\\
500 & 50 & 2 & 0.09 & n.a. & 0.06 & n.a. & n.a. & n.a.\\ 
& 200 & 2 & 0.18 & n.a. & 0.58 & n.a. & n.a. & n.a.\\
1000 & 100 & 2 & 0.30 & n.a. & 0.41 & n.a. & n.a. & n.a.\\
 & 400 & 2 & 0.63 & n.a. & 4.17 & n.a. & n.a. & n.a.\\
 \bottomrule
\end{tabular}
\caption{Results for solving the PPS on simulated binary, semi-simplex, tree-child networks with the number of leaves, numbers of reticulations and cardinality of $S_p$ as parameters (see the first three columns). Columns 4 and 5 refer to the LP relaxation of Formulation~\ref{form::bpps}, columns 6 and 7 refer to the algorithm ApproxPPS, and columns 8 and 9 refer to Algorithm~\ref{PPSexact}. All reported numbers are averages over the results from the respective data sets, containing 25 networks and characters each. For solutions $(a,c)$ and $\lambda^*$ to the LP relaxation and Algorithm~\ref{PPSexact}, respectively, the optimality gap is given by the relative difference $(W(\lambda^*)-W(\lambda_a))/W(\lambda^*)$. The factor for ApproxPPS is its empirical approximation factor. Branches depicts the number of coupled strong and pseudo-cost branching decisions (line~28 in Algorithm~\ref{PPSexact}).}
\label{experiments1}
\end{table}

First, in Table~\ref{experiments1} we compare the average quality and runtime of the LP relaxation of Formulation~\ref{form::bpps}, ApproxPPS and Algorithm~\ref{PPSexact} on binary, semi-simplex, tree-child networks. Recall that Algorithm~\ref{PPSexact} employs the other two algorithms as subroutines both in their original form at the root of the branch-\&-bound scheme and adaptively throughout the branching process. Up to networks with 100 taxa and 40 reticulation vertices the average runtimes do not exceed two minutes. However, outliers in the data (documented in the online repository) require runtimes of over one hour individually. This effect is more pronounced for even larger networks which makes the study of average quality and runtime of Algorithm~\ref{PPSexact} intractable with an one hour threshold, i.e., 25 hour threshold for each data set. Up to 1000 taxa we observe that the runtime of the LP relaxation scales significantly better than ApproxPPS. This phenomenon is supported by the simplicity of the constraints in Formulation~\ref{form::bpps} and its small size as well as the significant computational overhead of auxiliary calculations in the propagation scheme ApproxPPS to ensure the quality guarantee of Proposition~\ref{prop::approx2depth1}. While an approximation factor of at most two for ApproxPPS is attained for individual inputs, on average the quality of the approximation is significantly better. Moreover, both the empirical approximation factor and the optimality gap of the LP relaxation are negatively affected by increases in the number of reticulation vertices. In addition, both measures indicate that the underlying methods produce higher quality solutions for more than two character states. Hence, an average case analysis should align with these observations on data simulated by a birth-death process with uniformly random edge insertions.

\definecolor{myTeal}{HTML}{5C98A0}
\definecolor{myGold}{HTML}{FFCC00}
\begin{figure}[pos=!t,align=\centering]
\begin{tikzpicture}
	\pgfplotstableread[col sep=comma]{data1.csv}\csvdata
	\pgfplotstabletranspose\datatransposed{\csvdata} 
	\begin{axis}[
		width=0.3*\textwidth,
		height=6cm,
		boxplot/draw direction = y,
		x axis line style = {opacity=0},
		axis x line* = bottom,
		axis y line = left,
		enlarge y limits,
		ymajorgrids,
		xtick = {1, 2, 3, 4},
		xticklabel style = {align=center, font=\small},
		xticklabels = {(1,2), (1,4), (5,2), (5,4)},
		xtick style = {draw=none}, 
		ylabel = {Runtime (sec)},
		ymax = 7,
		ytick = {0.5,2,4,7}
	]
		
		\addplot+[boxplot, fill=myTeal!70, draw=black, mark=*, mark options={fill=myTeal!70, draw=black, thin}] table[y index=1] {\datatransposed};
		\addplot+[boxplot, fill=myGold!70, draw=black, mark=*, mark options={fill=myGold!70, draw=black, thin}] table[y index=2] {\datatransposed};
		\addplot+[boxplot, fill=myTeal!70, draw=black, mark=*, mark options={fill=myTeal!70, draw=black, thin}] table[y index=3] {\datatransposed};
		\addplot+[boxplot, fill=myGold!70, draw=black, mark=*, mark options={fill=myGold!70, draw=black, thin}] table[y index=4] {\datatransposed};
				
	\end{axis}
\end{tikzpicture}~~~~
\begin{tikzpicture}
	\pgfplotstableread[col sep=comma]{data2.csv}\csvdata
	\pgfplotstabletranspose\datatransposed{\csvdata} 
	\begin{axis}[
		width=0.3*\textwidth,
		height=6cm,
		boxplot/draw direction = y,
		x axis line style = {opacity=0},
		axis x line* = bottom,
		axis y line = left,
		enlarge y limits,
		ymajorgrids,
		xtick = {1, 2, 3, 4},
		xticklabel style = {align=center, font=\small},
		xticklabels = {(1,2),(1,4),(5,2),(5,4)},
		xtick style = {draw=none}, 
		ylabel = {Runtime (sec)},
		ymax = 220,
		ytick = {10,50,100,200}
	]
		\addplot+[boxplot, fill=myTeal!70, draw=black, mark=*, mark options={fill=myTeal!70, draw=black, thin}] table[y index=1] {\datatransposed};
		\addplot+[boxplot, fill=myGold!70, draw=black, mark=*, mark options={fill=myGold!70, draw=black, thin}] table[y index=2] {\datatransposed};
		\addplot+[boxplot, fill=myTeal!70, draw=black, mark=*, mark options={fill=myTeal!70, draw=black, thin}] table[y index=3] {\datatransposed};
		\addplot+[boxplot, fill=myGold!70, draw=black, mark=*, mark options={fill=myGold!70, draw=black, thin}] table[y index=4] {\datatransposed};
		
	\end{axis}
\end{tikzpicture}~\\~\\
\begin{tikzpicture}
	\pgfplotstableread[col sep=comma]{data3.csv}\csvdata
	\pgfplotstabletranspose\datatransposed{\csvdata} 
	\begin{axis}[
		width=0.3*\textwidth,
		height=6cm,
		boxplot/draw direction = y,
		x axis line style = {opacity=0},
		axis x line* = bottom,
		axis y line = left,
		enlarge y limits,
		ymajorgrids,
		xtick = {1, 2, 3, 4},
		xticklabel style = {align=center, font=\small},
		xticklabels = {(1,2),(1,4),(5,2),(5,4)},
		xtick style = {draw=none}, 
		ylabel = {Runtime (sec)},
		ymax = 18,
		ytick = {1,5,10,15}
	]
		\addplot+[boxplot, fill=myTeal!70, draw=black, mark=*, mark options={fill=myTeal!70, draw=black, thin}] table[y index=1] {\datatransposed};
		\addplot+[boxplot, fill=myGold!70, draw=black, mark=*, mark options={fill=myGold!70, draw=black, thin}] table[y index=2] {\datatransposed};
		\addplot+[boxplot, fill=myTeal!70, draw=black, mark=*, mark options={fill=myTeal!70, draw=black, thin}] table[y index=3] {\datatransposed};
		\addplot+[boxplot, fill=myGold!70, draw=black, mark=*, mark options={fill=myGold!70, draw=black, thin}] table[y index=4] {\datatransposed};
		
	\end{axis}
\end{tikzpicture}~~~~
\begin{tikzpicture}
	\pgfplotstableread[col sep=comma]{data4.csv}\csvdata
	\pgfplotstabletranspose\datatransposed{\csvdata} 
	\begin{axis}[
		width=0.3*\textwidth,
		height=6cm,
		boxplot/draw direction = y,
		x axis line style = {opacity=0},
		axis x line* = bottom,
		axis y line = left,
		enlarge y limits,
		ymajorgrids,
		xtick = {1, 2, 3, 4},
		xticklabel style = {align=center, font=\small},
		xticklabels = {(1,2),(1,4),(5,2),(5,4)},
		xtick style = {draw=none}, 
		ylabel = {Runtime (sec)},
		ymax = 1800,
		ytick = {100,500,1000,1500}
	]
		\addplot+[boxplot, fill=myTeal!70, draw=black, mark=*, mark options={fill=myTeal!70, draw=black, thin}] table[y index=1] {\datatransposed};
		\addplot+[boxplot, fill=myGold!70, draw=black, mark=*, mark options={fill=myGold!70, draw=black, thin}] table[y index=2] {\datatransposed};
		\addplot+[boxplot, fill=myTeal!70, draw=black, mark=*, mark options={fill=myTeal!70, draw=black, thin}] table[y index=3] {\datatransposed};
		\addplot+[boxplot, fill=myGold!70, draw=black, mark=*, mark options={fill=myGold!70, draw=black, thin}] table[y index=4] {\datatransposed};
		
	\end{axis}
\end{tikzpicture}
\caption{A boxplot for the runtime of Algorithm~\ref{PPSexact} on 16 data sets each containing 25 simulated binary, tree-child networks. The tuples $(s,t)$ on the x-axis parameterize the networks to have reticulation depth at most $s$ and number of character states $t$. Furthermore, the figure on the top-left / top-right / bottom-left / bottom-right requires 50/50/100/100 taxa and 5/20/10/40 reticulations, respectively.}\label{experiments2}
\end{figure}

Next, consider the performance of Algorithm~\ref{PPSexact} illustrated in Figure~\ref{experiments2}. Observe that plot parameters of the form $(1,t)$ correspond to the semi-simplex networks in Table~\ref{experiments1}. Here, in addition to average runtimes, outliers are shown. We can see that there are no significant differences between runtimes for fixed numbers of reticulation vertices. Moreover, no parameter setting is always better than another. However, increasing the reticulation depth to at most five on binary characters leads to reliably low runtimes for Algorithm~\ref{PPSexact} with the exception of one outlier for 100 taxa with 10 reticulation vertices. Due to the randomness of the simulated data and the prevalence of statistical outliers in it, further study is warranted to explain this behaviour. We can also observe that moving from two to four character states slows Algorithm~\ref{PPSexact} down in the worst case. This can be explained by the averaging effect in line~28 of Algorithm~\ref{PPSexact} as pairwise distinct character states are treated as equally valuable for branching. Hence, extending Algorithm~\ref{PPSexact} to learn which character states are more beneficial than others to converge to an optimal solution could ameliorate the runtime increase for non-binary characters. Finally, comparing the boxplots between the left and right figures in Figure~\ref{experiments2} we can see that the increase in the ratio of reticulation vertices in the respective networks from 10\% of leaves to 40\% of leaves can increase the runtime by two magnitudes. This suggests that the complexity of the PPS is more closely linked to the density of reticulation vertices in the network beyond the reticulation depth measure.

\section{Conclusion and future research}
We have shown how to approximate the PPS on binary, semi-simplex, tree-child phylogenetic networks and arbitrary characters with factor two in polynomial time. Furthermore, we have introduced an integer programming model for the PPS on binary, tree-child phylogenetic networks to solve the PPS exactly on simulated instances of up to 100 taxa. To the best of our knowledge this article contains the first empirical study of the PPS. From a practical point of view this opens up multiple research questions: first, this work does not delve into the biological assumptions on the input data which could benefit a further structured and larger scale computational study of our new algorithms. In particular, our algorithms can improve methods like NetRAX~\cite{lutteropp22} who use the parsimony score of a phylogenetic network as part of an inference of evolutionary histories by extending the support to parental parismony, i.e., allow for allopolyploidy and incomplete lineage sorting in the inference process. Secondly, our experiments indicate that the integrality gap of Formulation~\ref{form::bpps} is relatively small. Moreover, Formulation~\ref{form::bpps} exhibits structural similarities to the integer programming model for the SPS by Fischer et al.~\cite{fischer15}. While a proof for the integrality gap of both models is desirable, it also raises the practical question whether the integer programming model for the SPS of Schmidt \& Raphael~\cite{schmidt25}, which shows better performance than the model by Fischer et al.~\cite{fischer15} (see Figure 2b in \cite{schmidt25}), can be extended to derive more efficiently tractable lower bounds for the PPS to improve Algorithm~\ref{PPSexact}. Finally, our construction of Algorithm~\ref{PPSexact} has hinted at the use of reliability branching. A more detailed study of such machine learning techniques to solve the PPS exactly would benefit scalability. Specifically, the effects of our adaptive use of lower and upper bounds to solve the PPS on learned pseudo-costs can be studied to limit redundancies in the branch-\&-bound procedure.

Next to the theoretical questions that accompany empirical studies of the PPS we also want to point out some broader generalizations of the PPS as an operations research problem in itself. On the one hand, the input graph and vertex labels can be more general than binary, tree-child phylogenetic networks with leaf labels. Both orchard and tree-based phylogenetic networks~\cite{kong1} generalize the tree-child property, the underlying graph might be considered as regular instead of binary and labels can be extended to some internal vertices. We have already introduced the latter point indirectly in our adaptive implementation of ApproxPPS by requiring the input to satisfy $s\in\lambda(v)$ for some states $s\in S_p$ and any lineage function $\lambda$ on the corresponding network. Furthermore, the use of the Hamming distance as a binary/uniform metric to calculate the parental parsimony score can be generalized, too. These variations of the PPS are reminiscent of the NP-hard metric labeling problem~\cite{tardos02} which, given a weighted graph, a set of vertex labels and a metric distance function asks for an assignment of labels to vertices and the separation of vertices with pairwise distinct labels, minimizing both an assignment and separation cost. For the uniform metric the metric labeling problem can be approximated with factor two~\cite{tardos02}, a version similar to the PPS which excludes the label assignment from the decision making but in addition asks for an embedding of a most parsimonious parentally displayed tree within the input graph instead of minimizing the total separation cost of the graph. Studying the connection between the PPS and the metric labeling problem could therefore unveil structural similarities between problems in phylogenetics and application of the metric labeling problem such as image segmentation and computer vision.

\section*{Acknowledgments}
The author thanks Olaf Steenstra and Steven Kelk for computational support and helpful discussions. This work was supported by grant OCENW.M.21.306 of the Dutch Research Council (NWO).

\bibliographystyle{cas-model2-names}
\bibliography{biblioteca}
\end{document}